\begin{document}

\title{Krylov projection methods for linear Hamiltonian systems}

\author{Elena Celledoni         \and
        Lu Li 
}

\institute{Elena Celledoni  \at
              Department of Mathematical Sciences, NTNU, 7491 Trondheim, Norway \\
              Tel.: +47 73593541\\
              \email{elena.celledoni@ntnu.no}           
           \and
           Lu Li\at
           Department of Mathematical Sciences, NTNU, 7491 Trondheim, Norway \\
            Tel.: +47 73591650\\
            \email{lu.li@ntnu.no} 
}

\date{Received: date / Accepted: date}

\maketitle

\begin{abstract}
We study geometric properties of Krylov projection methods for large and sparse linear Hamiltonian systems. We consider in particular energy-preservation.  We discuss the connection to structure preserving model reduction. We illustrate the performance of the methods by applying them to Hamiltonian PDEs.
\keywords{Hamiltonian \and Energy-preserving \and Krylov \and Model reduction}
\end{abstract}

\section{Introduction}
\label{sec:introduc}

Large and sparse linear Hamiltonian systems arise in many fields of science and engineering, examples are models in network dynamics \cite{van2014port} and the semi-discretization of Hamiltonian partial differential equations (PDEs), like the wave equation \cite{feng1987symplectic,mclachlan1993symplectic}  
and Maxwell's equations \cite{marsden1982hamiltonian,sun10sam}. 
In the context of Hamiltonian PDEs, the energy conservation law often plays a crucial role in the proof of existence and uniqueness of solutions \cite{taylor115partial}. Energy-preservation under numerical discretization can be advantageous as it testifies correct qualitative behaviour of the numerical solution, and it is also useful to prove convergence of numerical schemes  
\cite{richtmyer1967difference}. 
There is an extensive literature on energy-preserving methods for ordinary differential equations (ODEs) \cite{labudde1975energy,mclachlan1999geometric,brugnano10hbv,celledoni2012preserving},
 but these methods need to be implemented efficiently to be competitive for large and sparse systems arising in numerical PDEs. Krylov projection methods are attractive for discrete PDE problems because they are iterative, accurate and they allow for restart and preconditioning strategies. But their structure preserving properties are not completely understood and should be further studied.
  
 It is well known that integration methods cannot be simultaneously symplectic and energy-preserving on general Hamiltonian systems \cite{ge88lph}. However, the situation changes when we restrict to linear systems. An example is the midpoint rule which is symplectic and is also energy-preserving on linear problems because it coincides with the AVF method \cite{quispel08anc}. The midpoint method is implicit and requires the solution of one linear system of algebraic equations at each time step. The structure preserving properties are then retained only to the precision of the linear iterative solver.
In this paper, we investigate preservation of geometric properties in Krylov projection methods. These are attractive methods for the solution of large systems arising in PDEs \cite{botchev2009numerical}, but because of the Krylov projection, symplecticity is only preserved to the accuracy of the method. On the other hand, we show that some of these methods can be energy-preserving to a higher level of precision, and can preserve several first integrals simultaneously.
We finally discuss the connections to structure-preserving model reduction and variational principles.
Previous work in the context of structure preserving Krylov projection methods can be found in \cite{lopez2006preserving,MR3126684} and for Hamiltonian eigenvalue problems for example in \cite{benner98ans}.

The structure of this paper is as follows. We discuss symplecticity in section \ref{Krylov projection and symplecticity}. Section~\ref{sec:APM} is devoted to the preservation of first integrals.
Section \ref{sec:Arnoldi-Model reduction} is devoted to projection methods based on block $J$-orthogonal bases and their connection to structure preserving model reduction. In Section \ref{sec:numerical simulation}, the geometric properties of the considered methods are illustrated by numerical examples.

\section{Krylov projection and symplecticity}\label{Krylov projection and symplecticity}
Consider a linear Hamiltonian initial value problem of the form
\begin{equation}\label{original eq}
\dot{y}=JH\,y,\quad y(0)=y_{0},\hskip2cm
J=J_m = \left[ \begin{matrix}
0 & I_{m}\\
-I_{m} &  0 
\end{matrix} \right],
\end{equation} 
\noindent where $y(t)\in \mathbb{R}^{2m}$, $H\in \mathbb{R}^{2m\times 2m}$ is symmetric, $y_{0}\in \mathbb{R}^{2m}$, and $I_m$ is the $m\times m$ identity matrix. In what follows we denote by $A$ the product $A=JH$. The skew-symmetric matrix $J$ defines a symplectic inner product on $\mathbb{R}^{2m}$, $\omega (x,y):=x^TJy.$
The vector field of equation  \eqref{original eq} is a Hamiltonian vector field.
The flow  of a Hamiltonian vector field is a symplectic map. This means that 
$\varphi_t:\mathbb{R}^{2m}\rightarrow \mathbb{R}^{2m}$, $y_0\mapsto y(t)$,
is such that 
$\Psi_{y_0}(t)=\frac{\partial \varphi_t(y_0)}{\partial y_0}$ satisfies 
\begin{equation}
\label{symplecticity}
\Psi_{y_0}(t)^T\, J \, \Psi_{y_0}(t) = J.
\end{equation}
In other words, $\Psi_{y_0}(t)$ is an element of the symplectic group $\mathrm{Sp}(2m)$, and
$y(t)=\Psi_{y_0}(t)y_0$. Crucially, any element of $U\in \mathrm{Sp}(2m)$ is such that the change of variables $x=U y$ sends Hamiltonian systems to Hamiltonian systems.
Denote by 
$\mathcal{H}(y)=\frac{1}{2}{y}^{T}J^{-1}Ay $ 
 the energy function. 
 Another fundamental property of the system  (\ref{original eq}) is that $\mathcal{H}$ is constant along solution trajectories, i.e.,
$\frac{d\, \mathcal{H}(y(t))}{d t}=0.$ An approximation method for \eqref{original eq} is said to be energy-preserving if  $\mathcal{H}$ is constant along the numerical solution, and symplectic if the numerical flow  $\phi_h: \mathbb{R}^{2m}\rightarrow \mathbb{R}^{2m}$, $y_0\mapsto \tilde{y}$ with $\tilde{y}\approx y(h)$, is such that $\frac{\partial \phi_h(y_0)}{\partial y_0}^TJ\frac{\partial \phi_h(y_0)}{\partial y_0}=J.$

The idea of Krylov projection methods is to build numerical approximations for (\ref{original eq})  in the Krylov subspace: 
$$\mathcal{K}_{r}(A, y_{0}):=\mathrm{span} \{y_{0}, Ay_{0}, \cdots ,A^{r-1}y_{0}\},$$   
which is a subspace of $\mathbf{R}^{2m}$ of dimension $r<<2m$.
Let us consider even dimension $r=2n$. A basis of $\mathcal{K}_{2n}(A, y_{0})$ is constructed. The most well known Krylov projection method is the one based on the Arnoldi algorithm \cite{arnoldi1951principle}  generating an orthonormal basis for $\mathcal{K}_{2n}(A, y_{0})$. The method gives rise to a $2m\times 2n$ matrix $V_{2n}$ with orthonormal columns, and to an upper Hessenberg $2n\times 2n$ matrix $H_{2n}$ such that
$I_{2n}={V_{2n}}^{T}V_{2n},$ and $H_{2n}=V_{2n}^TAV_{2n}.$
The approximation of $y(t)$ is 
\begin{equation}
y_A:=V_{2n}z(t),\quad \mathrm{where} \quad \dot{z}=H_{2n}\,z, \quad z(0)=z_0=V_{2n}^Ty_{0}.
\label{Arnoldi small system}
\end{equation}
We will denote this method by Arnoldi projection method (APM). Consider $J_{2n}$ and the symplectic inner product in $\mathbb{R}^{2n}$, $\bar{\omega}(\bar{x},\bar{y})=\bar{x}^TJ_{2n}\bar{y}$.
If $n<m$, unless we make further assumptions on $H$,    the projected system \eqref{Arnoldi small system} is not a Hamiltonian system in $\mathrm{R}^{2n}$, this can be seen because $J_{2n}^{-1}H_{2n}=J_{2n}^{-1}V_{2n}^TJHV_{2n}$ is in general not symmetric.

Instead of using an orthonormal basis, one can construct a
$J$-orthogonal basis for $\mathcal{K}_{2n}(A, y_{0})$ using the symplectic Lanczos algorithm \cite{benner2011hamiltonian}. 
The matrix 
$S_{2n}$ whose columns are the vectors of this $J$-orthogonal basis satisfies
$$S_{2n}^TJS_{2n}=J_{2n}.$$ 
We will denote the corresponding Krylov projection method by Symplectic Lanczos projection method (SLPM).
The projected system for SLPM is analog to \eqref{Arnoldi small system}, with $V_{2n}$ replaced by $S_{2n}$,  $H_{2n}$ by $J_{2n}S_{2n}^THS_{2n}$
and  an appropriate $z_0$ (see Section~\ref{subsec:SLPM}). 
This projected system is a Hamiltonian system. 
But for $n<m$, the approximation $y_S(t):=S_{2n}z(t)$ is not symplectic.  In fact, $y_S$ is the solution of the system 
\begin{equation}
\label{SLPM equation}
\dot{y}_S=(S_{2n}J_{2n}S_{2n}^T)\,H\,y_S,\quad y_S(0)=y_0,
\end{equation}
which is a Poisson system with Poisson structure given by the skew-symmetric matrix $(S_{2n}J_{2n}S_{2n}^T)$ which depends on the initial condition\footnote{A Poisson system in $\mathbb{R}^{d}$ is a system of the type $\dot{y}= \Omega\, \nabla \mathcal{H}(y)$, where $\Omega$ is skew-symmetric, not necessarily invertible and can depend on $y$. In our case, $\Omega$ depends on $y_0$.}.
For $n=m$, $S_{2m}\in\mathrm{Sp}(2m)$, $J_{2m}=J$, and $y_S=y.$ 
However, the case $n<m$ is the most relevant for the use of the method in practice.
In spite of not preserving $\omega$, 
SLPM clearly shares important  structural properties with the exact solution of \eqref{original eq} and is energy-preserving, see Section~\ref{subsec:SLPM}.

The symplectic Lanczos algorithm is not the only way to obtain a $J$-orthogonal basis of the Krylov subspace. We will consider block $J$-orthogonal bases in Section~\ref{sec:Arnoldi-Model reduction} and show that they can be viewed as techniques of structure preserving model reduction, in the spirit of \cite{lall2003structure}. We propose one Krylov algorithm based on these ideas.

\section{Preservation of first integrals and energy}\label{sec:APM}
We first present a result about the first integrals for a general linear Hamiltonian system. 
\begin{proposition}\label{first integrals original eq}
For $A=JH$ where $J$ is skew symmetric and invertible, and $H$ is symmetric and invertible, the system $\dot{y}=Ay$, $y(0)=y_0$ has $m$ independent first integrals in involution, $\mathcal{H}_k(y)=\frac{1}{2} \langle y, A^{2k}y\rangle_H$ for $k=0,1,\dots ,m-1$. The Hamiltonian of the system is $\mathcal{H}=\mathcal{H}_0$.
\end{proposition}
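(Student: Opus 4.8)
The plan is to peel off the three requirements in the definition --- conservation, involution, independence --- after first isolating the one algebraic identity that powers all of them. Write $\langle x,y\rangle_H=x^TH y$, so that $\mathcal{H}_k(y)=\tfrac12\, y^T H A^{2k} y$, and observe that everything hinges on the symmetry type of the matrices $HA^{p}$. Since $J$ is skew-symmetric and $H$ symmetric,
\begin{equation*}
A^T H=(JH)^T H=H^TJ^TH=-H(JH)=-HA,
\end{equation*}
that is, $A$ is skew-adjoint for the form $\langle\cdot,\cdot\rangle_H$. Iterating $A^T H=-HA$ gives $(A^T)^{p}H=(-1)^{p}HA^{p}$, whence $(HA^{p})^T=(A^T)^pH=(-1)^p HA^{p}$: the matrix $HA^{p}$ is symmetric when $p$ is even and skew-symmetric when $p$ is odd. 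In particular $HA^{2k}$ is symmetric, so $\mathcal{H}_k$ is a genuine quadratic form with $\nabla\mathcal{H}_k(y)=HA^{2k}y$, and $\mathcal{H}_0=\tfrac12 y^THy$ is the Hamiltonian because $\dot y=JHy=J\nabla\mathcal{H}_0$.

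Next I would settle conservation and involution in a single computation, using the Poisson bracket $\{F,G\}=\nabla F^{T}J\,\nabla G$ attached to the structure matrix $J$. With the gradients above and the symmetry of $HA^{2j}$,
\begin{equation*}
\{\mathcal{H}_j,\mathcal{H}_k\}=(HA^{2j}y)^T J\,(HA^{2k}y)=y^T H A^{2j}\,(JH)\,A^{2k}\, y=y^T H A^{2j+2k+1} y,
\end{equation*}
where I used $JH=A$ to absorb the middle factor. The exponent $2j+2k+1$ is odd, so $HA^{2j+2k+1}$ is skew-symmetric and the quadratic form vanishes identically; hence $\{\mathcal{H}_j,\mathcal{H}_k\}=0$ for all $j,k$. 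Conservation along the flow is then the special case $\tfrac{d}{dt}\mathcal{H}_k=\{\mathcal{H}_k,\mathcal{H}_0\}=0$, so this one identity delivers both the first-integral property and involution at once.

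The hard part will be independence. Functional independence of $\mathcal{H}_0,\dots,\mathcal{H}_{m-1}$ means that their gradients $HA^{2k}y$ ($k=0,\dots,m-1$) are linearly independent on a dense open set, and since $H$ is invertible this is equivalent to independence of $y,A^2y,\dots,A^{2(m-1)}y$ there, i.e. to a generic $y$ being $A^2$-cyclic in a subspace of dimension at least $m$. This holds exactly when the minimal polynomial of $B:=A^2$ has degree at least $m$, and this is where I expect the real obstacle to sit. I would resolve it through the spectral symmetry forced by the structure: from $HAH^{-1}=HJ=-A^{T}$ the matrix $A$ is similar to $-A^{T}$, so its spectrum is symmetric about the origin, and because $A=JH$ is invertible the $2m$ eigenvalues split as $\pm\lambda_1,\dots,\pm\lambda_m$. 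Then $B=A^2$ has eigenvalues $\lambda_1^2,\dots,\lambda_m^2$, and when these are distinct the minimal polynomial of $B$ has degree at least $m$, a generic $y$ is cyclic, and independence follows by the usual Vandermonde/cyclic-vector argument. The point I would state explicitly is precisely this genericity hypothesis: in degenerate cases such as $H=I_{2m}$ (where $A^2=-I_{2m}$ and all the $\mathcal{H}_k$ are proportional) independence fails, so the claim is to be read under the assumption that $A^2$ has $m$ distinct eigenvalues, which is the generic situation for the discretized Hamiltonian PDEs of interest here.
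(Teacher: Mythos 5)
Your computations for conservation and involution are correct and algebraically the same as the paper's: the paper likewise reduces both claims to the skew-symmetry of $H(JH)^{2(k+p)+1}$, verifying conservation by differentiating $\mathcal{H}_k$ along the flow rather than as the special case $\{\mathcal{H}_k,\mathcal{H}_0\}=0$ of the bracket identity; your packaging of everything through the single relation $(HA^{p})^{T}=(-1)^{p}HA^{p}$ is a tidy reorganization, not a different method. The genuine divergence is in the independence step, and there you are more careful than the source. The paper's proof simply asserts that, because $J$ and $H$ are invertible, the vectors $J^{-1}\nabla\mathcal{H}_k(y)$, $k=0,\dots,m-1$, are linearly independent, with no supporting argument. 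Your example $H=I_{2m}$ (so $A=J$, $A^{2}=-I_{2m}$, and every $\mathcal{H}_k$ is $\pm\mathcal{H}_0$) shows that this assertion is false as stated: invertibility alone does not make a generic $y$ cyclic of order $m$ for $A^{2}$. Your diagnosis --- that generic independence of $y,A^{2}y,\dots,A^{2(m-1)}y$ is equivalent to the minimal polynomial of $A^{2}$ having degree at least $m$, which holds for instance when the spectrum $\pm\lambda_1,\dots,\pm\lambda_m$ of $A$ produces $m$ distinct values $\lambda_i^{2}$ --- is the correct repair, and the explicit genericity hypothesis you add is one the proposition needs. In short: on the first two claims your proof coincides with the paper's; on independence it exposes, and fixes, a real gap in the paper's own argument.
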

\begin{proof}
We have
\begin{eqnarray*}
\frac{d}{dt}\mathcal{H}_k(y)&=&\frac{1}{2}\left[\dot{y}^TH(JH)^{2k}y+y^TH(JH)^{2k}\dot{y}\right]\\
&=&
\frac{1}{2}\left[-y^THJH(JH)^{2k}y+y^TH(JH)^{2k}JHy\right]\\
&=&\frac{1}{2}\left[-y^TH(JH)^{2k+1}y+y^T H(JH)^{2k+1}y\right]=0,
\end{eqnarray*}
so $\mathcal{H}_k$, $k=0,\dots , m-1$ are preserved along solutions of $\dot{y}=Ay$, $y(0)=y_0$.
The integrals are in involution because their Poisson bracket is zero,
\begin{align*}
\{\mathcal{H}_k,\mathcal{H}_p\}&=(\nabla \mathcal{H}_k)^T J\nabla \mathcal{H}_p=y^T((JH)^{2k})^THJH(JH)^{2p}y\\
&=y^TH(JH)^{2(k+p)+1} y=0,
\end{align*}
where we have used the skew-symmetry of $H(JH)^{2(k+p)+1}$. 
The integrals are functionally independent because, when $J$ and $H$ are invertible, $J^{-1}\nabla \mathcal{H}_k=2J^{-1}A^{2k}y$ for $k=0,\dots, m-1$ are linearly independent vectors\footnote{Note that the invertibility of $J$ and $H$  is needed only to prove that the integrals are functionally independent in this proof.}
\end{proof}
In what follows, we will discuss the preservation of the first integrals of Proposition~\ref{first integrals original eq}  when applying Krylov projection methods.

\subsection{Preservation of first integrals for the APM}
It can be observed from numerical simulations
that the APM fails in general to preserve energy when applied to Hamiltonian systems, Figure~\ref{Block diagonal-energy error-apm}, Section~\ref{sec:numerical simulation}, but structure-preserving properties can be ensured for such method via a simple change
of inner product. Assume that $H$ is symmetric and positive definite so that $\langle \cdot, \cdot \rangle_H:=\langle \cdot, H\cdot \rangle$ defines an inner product.
We modify the Arnoldi algorithm by replacing the usual inner product $\langle \cdot, \cdot \rangle$ by $\langle \cdot, \cdot \rangle_H$. We then show that the numerical solution given by this method preserves to machine accuracy certain first integrals.
The modified Arnoldi algorithm  (see Algorithm \ref{alg:Arnoldi modified inner product algorithm}) 
generates a $H$-orthonormal basis, which is stored in the $2m\times n$ matrix $V_{n}$, satisfying $V_{n}^{T}HV_{n}=I_{n}$.  This algorithm generates an upper Hessenberg matrix $H_{n}$ such that 
\vspace{-7pt}
\begin{equation*}\label{modified Arnoldi relation}
\begin{split}
AV_{n}&=V_{n}H_{n}+w_{n+1}e_{n}^{T},\quad w_{n+1}=h_{n+1,n}v_{n+1},\\
{V_{n}}^{T}HV_{n}&=I_{n}, \quad {V_{n}}^{T}Hw_{n+1}=0.
\end{split}
\vspace{-10pt}
\end{equation*}
In what follows, we consider the Krylov projection method 
$$y_H:=V_{n}z,\quad \mathrm{where}\,\, z\,\, {\mathrm{satisfies}} \quad 
\dot{z}=H_{n}\,z,\quad
z(0)={V_{n}}^{T}Hy_0.
$$
\begin{proposition}\label{first integral projected inner original Equations}
The numerical approximation $y_H$ for the solution $y$ of \eqref{original eq}
preserves the following first integrals: 
\begin{equation}
\label{DAPM}
\mathcal{H}_{k}(y_H)=\frac{1}{2}y_H^{T}HV_{n}(H_{n})^{2k}{V_{n}}^{T}Hy_H
\end{equation}
for all $k=0, 1, \dots, r$, where  
$r=[\frac{n}{2}]-1$. 
\end{proposition}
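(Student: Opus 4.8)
The plan is to pass to the reduced coordinates $z$ and to show that the preserved quantity there is a quadratic form built from an even power of a \emph{skew-symmetric} matrix, so that preservation follows by exactly the computation already carried out in Proposition~\ref{first integrals original eq}.

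First I would simplify \eqref{DAPM}. Since $y_H=V_{n}z$ lies in the range of $V_{n}$ and $V_{n}^{T}HV_{n}=I_{n}$, we have $V_{n}^{T}Hy_H=V_{n}^{T}HV_{n}z=z$ and likewise $y_H^{T}HV_{n}=z^{T}$. Hence \eqref{DAPM} collapses to $\mathcal{H}_{k}(y_H)=\frac12 z^{T}H_{n}^{2k}z$, a quadratic form in the reduced variable $z$, which evolves according to $\dot z=H_{n}z$. (Note also that $y_H(0)=V_{n}V_{n}^{T}Hy_0=y_0$, since $V_{n}V_{n}^{T}H$ is the $H$-orthogonal projector onto $\mathrm{range}(V_{n})$ and $y_0\in\mathcal{K}_{n}(A,y_0)=\mathrm{range}(V_{n})$.) Thus it suffices to understand the symmetry type of $H_{n}$.

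The key step is to show that $H_{n}$ is skew-symmetric. Left-multiplying the modified Arnoldi relation $AV_{n}=V_{n}H_{n}+w_{n+1}e_{n}^{T}$ by $V_{n}^{T}H$ and using $V_{n}^{T}HV_{n}=I_{n}$ together with $V_{n}^{T}Hw_{n+1}=0$ gives $H_{n}=V_{n}^{T}HAV_{n}=V_{n}^{T}(HJH)V_{n}$. Because $H$ is symmetric and $J$ skew-symmetric, $(HJH)^{T}=-HJH$, so $HJH$ is skew-symmetric and therefore so is its congruence $H_{n}$, i.e. $H_{n}^{T}=-H_{n}$. This is the reduced analogue of the skew-symmetry of $HA=HJH$ that drove Proposition~\ref{first integrals original eq}, and it is precisely the property that the modified ($H$-)inner product buys us: with the ordinary inner product $V_{n}^{T}AV_{n}$ carries no such symmetry, which is why the plain APM fails to preserve the energy.

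Finally I would differentiate. Along $\dot z=H_{n}z$, using $H_{n}^{T}=-H_{n}$,
\[
\frac{d}{dt}\mathcal{H}_{k}(y_H)=\tfrac12\big(z^{T}H_{n}^{T}H_{n}^{2k}z+z^{T}H_{n}^{2k}H_{n}z\big)=\tfrac12\big(-z^{T}H_{n}^{2k+1}z+z^{T}H_{n}^{2k+1}z\big)=0,
\]
which reproduces the cancellation in Proposition~\ref{first integrals original eq} with $A$ replaced by $H_{n}$. This already yields preservation for every $k\ge 0$; the stated range $k=0,\dots,r$ with $r=[\frac{n}{2}]-1$ records that these are the functionally independent ones. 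Indeed, $H_{n}$ is $n\times n$ and skew-symmetric, so its eigenvalues occur in conjugate imaginary pairs and $H_{n}^{2}$ has minimal polynomial of degree at most $[\frac{n}{2}]$; hence $I,H_{n}^{2},\dots,H_{n}^{2r}$ are linearly independent while higher even powers reduce to combinations of these by Cayley--Hamilton. This is the reduced counterpart of the count of $m$ independent integrals obtained in Proposition~\ref{first integrals original eq} for a $2m$-dimensional system. The genuinely delicate point is this independence/counting argument; the preservation itself is immediate once the skew-symmetry of $H_{n}$ is in hand.
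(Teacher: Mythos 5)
Your proof is correct and follows essentially the same route as the paper: it identifies $H_{n}=V_{n}^{T}HJHV_{n}$ as skew-symmetric and reduces \eqref{DAPM} to the preserved quadratic forms $\tfrac12 z^{T}H_{n}^{2k}z$ of the projected system $\dot z=H_{n}z$. The paper states these steps tersely; you merely fill in the derivation of $H_{n}$ from the Arnoldi relation and add a Cayley--Hamilton count explaining the range $k=0,\dots,[\tfrac{n}{2}]-1$, which the paper leaves implicit.
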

\vspace{-10pt}
\begin{proof}
We observe that $H_{n}={V_{n}}^{T}HJHV_{n}$ is skew-symmetric. So the ODE system for $z$ has first integrals: $\mathcal{I}_{k}(z)= \frac{1}{2}z^{T}(H_{n})^{2k}z$, for all $k=0, 1, \dots, r$ with $r=n/2-1$ if $n$ is even and $r=(n-1)/2-1$ if $n$ is odd. 
Therefore $\mathcal{H}_{k}(y_H)=\frac{1}{2}y_H^{T}HV_{n}(H_{n})^{2k}{V_{n}}^{T}Hy_H=\frac{1}{2}z^{T}(H_{n})^{2k}z$
 are preserved.
\end{proof}
\begin{remark}
If $n$ is even, the above Krylov projection method induces a projected problem which is conjugate to a Hamiltonian system, i.e., it can be written in the form \eqref{original eq} via  change of variables. Since $H_n$ is skew-symmetric, $H_n$ can be factorized as
$H_n=U_nJ_nD_nU_n^{-1}$ where $D_n$ is diagonal. Then, $H_n$ can be transformed to a Hamiltonian matrix by a similarity transformation using $U_n$.
\end{remark}

\subsection{Hamiltonian system with $JA=AJ$}\label{case $A$ Hamiltonian and $JA=AJ$}
We now consider $J$ given  by \eqref{original eq}. Assume that $A$ and $J$ commute, then $A$ is skew-symmetric, and the Hamiltonian system \eqref{original eq} has two Hamiltonian structures, one associated to $A$ with Hamiltonian $\frac{1}{2}y^Ty$, the other to $J$ with Hamiltonian $\frac{1}{2}y^THy$. The APM with Euclidean inner product $\langle \cdot, \cdot \rangle$ preserves modified first integrals. To proceed, we first give without proof the following result. 
\begin{proposition}\label{prop:1}
Suppose $A$ is a Hamiltonian matrix. Then $J$ and $A$ commute if and only if the matrix $A$ is skew-symmetric. 
\end{proposition}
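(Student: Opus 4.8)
The plan is to reduce the statement to the standard algebraic characterization of Hamiltonian matrices, after which both implications become one-line manipulations. First I would record the elementary facts about the structure matrix in \eqref{original eq}: $J^T=-J$, $J$ is invertible, and $J^2=-I_{2m}$. Recall that $A$ being a Hamiltonian matrix means $A=JH$ with $H=H^T$. Multiplying on the left by $J$ gives $JA=J^2H=-H$, which is symmetric; conversely if $JA$ is symmetric then $A=-J(JA)$ has the required form. Hence the Hamiltonian property is equivalent to the single relation
\begin{equation*}
A^TJ+JA=0,
\end{equation*}
i.e.\ $JA$ symmetric. This translation is where the specific form of $J$ enters, and it is really the conceptual core of the argument; everything else is formal.

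Next I would prove the two directions from this relation. For the ``if'' direction, assume $A$ is skew-symmetric, so $A^T=-A$. Substituting into $A^TJ+JA=0$ yields $-AJ+JA=0$, that is $JA=AJ$, so $J$ and $A$ commute. For the ``only if'' direction, assume $JA=AJ$ and use this to replace the term $JA$ in the Hamiltonian relation, obtaining $A^TJ+AJ=(A^T+A)J=0$. Since $J$ is invertible, we may cancel it on the right to conclude $A^T+A=0$, i.e.\ $A$ is skew-symmetric.

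There is no serious obstacle here: the only point requiring care is that the cancellation of $J$ in the forward implication relies on its invertibility, which holds for the canonical $J=J_m$ of \eqref{original eq} (indeed $J^{-1}=-J$). I would state the equivalence $A \text{ Hamiltonian} \iff A^TJ+JA=0$ explicitly at the outset so that both implications read off cleanly, and note in passing that the skew-symmetry of $A$ is exactly the condition under which the system \eqref{original eq} acquires the second Hamiltonian structure discussed in Section~\ref{case $A$ Hamiltonian and $JA=AJ$}.
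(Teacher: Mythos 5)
Your proof is correct. Note that the paper explicitly states this proposition \emph{without} proof, so there is no argument in the text to compare against; your derivation via the equivalent characterization $A^TJ+JA=0$ of Hamiltonian matrices, together with the invertibility of $J$ to cancel it in the forward direction, is the standard argument and cleanly fills the gap the authors left.
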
 
The first integrals of the  system \eqref{original eq} are given by the following proposition.
\begin{proposition}\label{first integrals original eq commutativity}
If $JA=AJ$, the Hamiltonian system \eqref{original eq} has $m$ independent first integrals in involution, $\mathcal{H}_k(y)=\frac{1}{2} y^TA^{2k}y$ for $k=0,1,\dots ,m-1$, and in involution with the Hamiltonian $\mathcal{H}(y)=\frac{1}{2}y^THy$.
\end{proposition}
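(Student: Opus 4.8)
The plan is to reduce everything to a single structural fact supplied by Proposition~\ref{prop:1}: since $A$ is Hamiltonian, the hypothesis $JA=AJ$ forces $A$ to be skew-symmetric, i.e. $A^T=-A$. Every claim will then follow by combining this skew-symmetry with the commutativity $JA=AJ$ (hence $JA^{j}=A^{j}J$ for all $j$). First I would verify that each $\mathcal{H}_k$ is genuinely a quadratic first integral. It is a bona fide quadratic form because $A^{2k}$ is symmetric (an even power of a skew matrix), and differentiating along $\dot y=Ay$ gives
\begin{equation*}
\frac{d}{dt}\mathcal{H}_k(y)=\tfrac12\bigl[(Ay)^TA^{2k}y+y^TA^{2k}Ay\bigr]=\tfrac12\bigl[-y^TA^{2k+1}y+y^TA^{2k+1}y\bigr]=0,
\end{equation*}
where the sign in the first term is produced by $A^T=-A$. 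This is the analogue of the computation in Proposition~\ref{first integrals original eq}, but simpler since no $H$-weight is carried.

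For the involution statements I would first record the gradients: because $A^{2k}$ is symmetric, $\nabla\mathcal{H}_k=A^{2k}y$, while $\nabla\mathcal{H}=Hy$. Using the Poisson bracket $\{F,G\}=(\nabla F)^TJ\nabla G$ exactly as in Proposition~\ref{first integrals original eq}, the mutual involution follows from
\begin{equation*}
\{\mathcal{H}_k,\mathcal{H}_p\}=y^TA^{2k}JA^{2p}y=y^TA^{2(k+p)}Jy=0,
\end{equation*}
the middle equality moving $J$ through $A^{2p}$ by commutativity and the final one holding because $A^{2(k+p)}J$ is skew-symmetric, its transpose being $-JA^{2(k+p)}=-A^{2(k+p)}J$. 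For involution with the Hamiltonian the same gradients give $\{\mathcal{H}_k,\mathcal{H}\}=y^TA^{2k}JHy=y^TA^{2k+1}y=0$, the last power being odd and therefore skew-symmetric. These computations are short once skew-symmetry and commutativity are in place.

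I expect the only real obstacle to be the functional independence of $\mathcal{H}_0,\dots,\mathcal{H}_{m-1}$, i.e. the linear independence of the gradients $y,A^2y,\dots,A^{2(m-1)}y$ at a generic point. Here I would argue spectrally: a real skew-symmetric $A$ on $\mathbb{R}^{2m}$ is normal with purely imaginary eigenvalues $\pm i\omega_1,\dots,\pm i\omega_m$, so $A^2$ is symmetric negative semidefinite with eigenvalues $-\omega_1^2,\dots,-\omega_m^2$. When these $m$ values are distinct, the minimal polynomial of $A^2$ has degree $m$, so $I,A^2,\dots,A^{2(m-1)}$ are linearly independent and the Krylov vectors $A^{2k}y$ are independent for generic $y$, giving the claim. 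I anticipate this step genuinely needs a non-degeneracy hypothesis on the frequencies $\omega_j$, to be stated alongside the result in the same spirit as the invertibility caveat footnoted after Proposition~\ref{first integrals original eq}; the remaining parts are robust and require no such assumption.
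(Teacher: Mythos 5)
Your proof is correct and follows essentially the same route as the paper: invoke Proposition~\ref{prop:1} to get $A^T=-A$, and then run the computations of Proposition~\ref{first integrals original eq} with $A$ playing the role of $J$ and $I$ the role of $H$, plus a separate bracket computation against $\mathcal{H}(y)=\frac12 y^THy$. Two of your refinements are worth noting. First, your computation $\{\mathcal{H}_k,\mathcal{H}\}=y^TA^{2k}JHy=y^TA^{2k+1}y=0$ (vanishing because an odd power of a skew-symmetric matrix is skew-symmetric) is the clean version of this step; the paper's displayed identity $y^TA^{2k}JAy=y^TA^k(JA)A^ky=0$ carries what looks like a spurious extra $J$, and its stated justification is shaky since $JA$ is in fact \emph{symmetric} when both factors are skew, so the vanishing does not follow in the form written there. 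Second, you are right that functional independence of $y,A^2y,\dots,A^{2(m-1)}y$ is the only genuinely delicate point: it requires a genericity/non-degeneracy condition (e.g.\ distinct frequencies and a cyclic initial vector), which the paper inherits without comment from the corresponding unproved assertion in Proposition~\ref{first integrals original eq}. Your spectral argument and the suggestion to state the hypothesis explicitly is more careful than the paper on this point; everything else matches.
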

\begin{proof}
From Proposition~\ref{prop:1} we know that $A$ is skew-symmetric. Then  Proposition~\ref{first integrals original eq} holds with $J$ replaced by $A$, and $H$ replaced by the identity matrix.
The integrals are in involution with the Hamiltonian $\mathcal{H}(y)=\frac{1}{2}y^THy$ in fact using the commutativity of $A$ and $J$
$$\{\mathcal{H}_k,\mathcal{H}\}=y^TA^{2k}JAy=y^TA^k(JA)A^ky=0,\quad k=0,\dots, m-1.
$$ 
\end{proof}
\begin{remark}\label{invariants for Arnoldi projection} 
By a direct application of Proposition~\ref{first integral projected inner original Equations}, the APM to the Hamiltonian system \eqref{original eq}, under the assumption $JA=AJ$, gives a numerical approximation $y_A:=V_nz$ which preserves the following modified first integrals
$$\mathcal{\tilde{H}}_k(y_A):=\frac{1}{2}y_A^TV_n(H_n)^{2k}V_n^Ty_A, \quad k=0,1,\dots, n.$$
\end{remark} 
We next prove that the Hamiltonian  of \eqref{original eq} is bounded by   $y_A$ under the assumption that $J$ and $A$ commute.
\begin{proposition}\label{bounded energy}
Assume the APM is applied to \eqref{original eq}.
Under the assumption $JA=AJ$, the energy 
$\mathcal{H}(y)=\frac{1}{2}{y}^{T}J^{-1}Ay,$
is bounded along the numerical solution.
\end{proposition}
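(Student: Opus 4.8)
The plan is to reduce the claim to the conservation of the Euclidean norm of the numerical solution $y_A$. First I would rewrite the energy in a more convenient form: since $A=JH$ and $J^{-1}=-J$, we have $J^{-1}A=J^{-1}JH=H$, so $\mathcal{H}(y)=\tfrac12 y^TJ^{-1}Ay=\tfrac12 y^THy$. Thus $\mathcal{H}$ is a fixed quadratic form in $y$, and by the Cauchy–Schwarz inequality (or the spectral bound for the symmetric matrix $H$) one has $|\mathcal{H}(y_A(t))|\le \tfrac12\|H\|_2\,\|y_A(t)\|^2$. Consequently, to prove boundedness it suffices to show that $\|y_A(t)\|$ remains constant in $t$.

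The key observation is that the reduced flow is norm-preserving. By Proposition~\ref{prop:1}, the assumption $JA=AJ$ forces $A$ to be skew-symmetric, and hence the projected matrix $H_n=V_n^TAV_n$ is also skew-symmetric. For the reduced system $\dot z=H_nz$ this gives $\tfrac{d}{dt}\,z^Tz=z^T(H_n+H_n^T)z=0$, so $\|z(t)\|=\|z(0)\|$ for all $t$. Equivalently, this is exactly the $k=0$ modified first integral $\tilde{\mathcal{H}}_0$ of Remark~\ref{invariants for Arnoldi projection}: since $y_A=V_nz$ and $V_n^TV_n=I_n$, we have $\tilde{\mathcal{H}}_0(y_A)=\tfrac12 y_A^TV_nV_n^Ty_A=\tfrac12 z^Tz=\tfrac12\|y_A\|^2$, which is preserved by the APM. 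Either route yields $\|y_A(t)\|=\|z(t)\|=\|z(0)\|$ using the orthonormality of $V_n$.

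It remains to pin down the value $\|z(0)\|$ from the initial condition. Because $y_0$ is (up to normalization) the first Arnoldi vector, it lies in $\mathcal{K}_n(A,y_0)=\mathrm{range}(V_n)$, so the initial projection is exact: $y_A(0)=V_nV_n^Ty_0=y_0$, and therefore $\|z(0)\|^2=y_0^TV_nV_n^Ty_0=\|y_0\|^2$. Combining the three steps gives, for all $t$,
\begin{equation*}
|\mathcal{H}(y_A(t))|=\tfrac12\,|y_A(t)^THy_A(t)|\le \tfrac12\,\|H\|_2\,\|y_A(t)\|^2=\tfrac12\,\|H\|_2\,\|y_0\|^2,
\end{equation*}
a bound independent of $t$, which establishes the boundedness of the energy along the numerical solution.

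I do not expect a serious obstacle here: the argument is essentially the transfer of skew-symmetry from $A$ to $H_n$ together with orthonormality of $V_n$. The only points requiring a little care are verifying that skew-symmetry of $A$ (guaranteed only under $JA=AJ$ via Proposition~\ref{prop:1}) does pass to $H_n$, and that the initial projection $V_nV_n^Ty_0$ reproduces $y_0$ exactly because $y_0\in\mathrm{range}(V_n)$; both are routine. It is worth emphasizing that this gives boundedness rather than exact conservation — the APM in the Euclidean inner product conserves $\|y_A\|^2$ but not $\mathcal{H}$ itself, so the sharp constant $\tfrac12\|H\|_2\|y_0\|^2$ is the natural conclusion.
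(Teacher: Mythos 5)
Your proposal is correct and follows essentially the same route as the paper: the paper's proof also invokes the $k=0$ modified invariant of Remark~\ref{invariants for Arnoldi projection} to conclude that $y_A^Ty_A$ is conserved and equals $y_0^Ty_0$, and then bounds the energy by $\tfrac12\|J^{-1}A\|_2\,y_0^Ty_0$, which is the same constant as your $\tfrac12\|H\|_2\|y_0\|^2$ since $J^{-1}A=H$. Your additional verifications (skew-symmetry passing from $A$ to $H_n$, and $y_0\in\mathrm{range}(V_n)$ so the initial projection is exact) are exactly the details the paper leaves implicit.
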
 
\begin{proof}
This result follows directly from Remark~\ref{invariants for Arnoldi projection} with $k=0$, i.e.,
 \begin{equation*}
 \frac{1}{2}{y_{A}}^{T}J^{-1}Ay_{A}\le\frac{1}{2}{y_{A}}^{T}y_{A}\|J^{-1}A\|_{2} =\frac{1}{2}{y_{0}}^{T}y_{0}\|J^{-1}A\|_{2}.
\end{equation*} 
\end{proof} 
Proposition~\ref{bounded energy} explains the good behaviour of the APM  in  \cite{celledoni16epk}.
\subsection{Symplectic Lanczos projection method}\label{subsec:SLPM}
 We now introduce the symplectic Lanczos projection method (SLPM). For this method the projected system  \eqref{Arnoldi small system} is  a Hamiltonian system. We  prove that the SLPM preserves the energy of the original system.

Given  $A\in R^{2m,2m}$ and the starting vector $y_{0}\in R^{2m}$,  the symplectic Lanczos method generates a sequence of matrices
\begin{equation}\label{symplectic-lancozs symplectic form}
S_{2n}=[v_{1},...,v_{n},w_{1},...w_{n}]\quad\mathrm{satisfying} \quad AS_{2n}=S_{2n}H_{2n}+r_{n+1}e^{T}_{2n},
\end{equation}
where $H_{2n}$ is a tridiagonal Hamiltonian matrix, and $r_{n+1}=\zeta_{n+1}v_{n+1}$ is $J$-orthogonal with respect to the columns of $S_{2n}$. Since $S_{2n}$ has $J$-orthogonal columns, i.e., ${S_{2n}}^{T}JS_{2n}=J_{2n}$, we know that
\begin{equation}\label{symplectic-lancozs H2n}
H_{2n}=J_{2n}^{-1}\,S_{2n}^TJAS_{2n}=J_{2n}S_{2n}^THS_{2n},
\end{equation}
and the projected system 
is a Hamiltonian system, where $z_{0}=J_{2n}^{-1}S_{2n}^TJy_{0}$. Moreover, we have
\begin{equation}\label{SLPM small system-energy}
\mathcal{H}_{S}(z)=\frac{1}{2} z^{T}J_{2n}^{-1}H_{2n}z\equiv\frac{1}{2} z_{0}^{T}J_{2n}^{-1}H_{2n}z_{0}.
\end{equation}

\begin{proposition}\label{SLPM energy preservation}
The SLPM is an energy-preserving method for  \eqref{original eq}.
\end{proposition}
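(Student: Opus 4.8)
The plan is to exploit the two facts already recorded in the text immediately preceding the statement: by \eqref{symplectic-lancozs H2n} and \eqref{SLPM small system-energy}, the projected system $\dot z = H_{2n}z$ is a Hamiltonian system whose Hamiltonian $\mathcal{H}_S(z)=\frac{1}{2}z^T J_{2n}^{-1}H_{2n}z$ is conserved in time. The whole proof then reduces to showing that the original energy $\mathcal{H}$ of \eqref{original eq}, evaluated along $y_S=S_{2n}z$, coincides \emph{exactly} with $\mathcal{H}_S(z)$. Since $\mathcal{H}_S(z(t))$ is constant, so is $\mathcal{H}(y_S(t))$.

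First I would rewrite the energy in the convenient form $\mathcal{H}(y)=\frac{1}{2}y^T J^{-1}Ay=\frac{1}{2}y^T Hy$, using $A=JH$. The crucial algebraic ingredient is the identity $J_{2n}^{-1}H_{2n}=S_{2n}^T H S_{2n}$, which is precisely the second equality in \eqref{symplectic-lancozs H2n} and follows from the $J$-orthogonality ${S_{2n}}^T J S_{2n}=J_{2n}$. Substituting this into $\mathcal{H}_S$ gives
\begin{equation*}
\mathcal{H}_S(z)=\frac{1}{2}z^T J_{2n}^{-1}H_{2n}z=\frac{1}{2}z^T S_{2n}^T H S_{2n}z=\frac{1}{2}(S_{2n}z)^T H(S_{2n}z)=\mathcal{H}(y_S),
\end{equation*}
so that the pull-back of the full energy to the reduced coordinates is exactly the reduced Hamiltonian.

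Then, because the reduced system is Hamiltonian, $\mathcal{H}_S(z(t))\equiv\mathcal{H}_S(z_0)$ by \eqref{SLPM small system-energy}, and therefore $\mathcal{H}(y_S(t))=\mathcal{H}_S(z(t))$ is constant along the numerical solution. Finally, to confirm that the conserved value is the correct initial energy, I would check that $y_S(0)=y_0$: since $y_0$ generates the first direction of the Krylov subspace it lies in the range of $S_{2n}$, say $y_0=S_{2n}c$, whence $z_0=J_{2n}^{-1}S_{2n}^T J y_0=J_{2n}^{-1}S_{2n}^T J S_{2n}c=c$, giving $S_{2n}z_0=y_0$ and $\mathcal{H}(y_S(0))=\mathcal{H}(y_0)$.

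I do not expect a genuine obstacle: the result is essentially a one-line consequence of \eqref{symplectic-lancozs H2n}--\eqref{SLPM small system-energy}. The only point deserving care is verifying that the quadratic form defining the reduced Hamiltonian, pulled back through $S_{2n}$, reproduces the original energy \emph{exactly} rather than up to the Krylov truncation error. This is what distinguishes energy-preservation from mere symplecticity here, and it rests entirely on the $J$-orthogonality relation ${S_{2n}}^T J S_{2n}=J_{2n}$ rather than on $S_{2n}$ being a symplectic map (which it is not for $n<m$).
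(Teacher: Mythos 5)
Your argument is correct and is essentially the paper's own proof: both reduce to the identity $\mathcal{H}(S_{2n}z)=\frac{1}{2}z^TS_{2n}^THS_{2n}z=\frac{1}{2}z^TJ_{2n}^{-1}H_{2n}z=\mathcal{H}_S(z)$, which follows from \eqref{symplectic-lancozs H2n}, combined with the conservation statement \eqref{SLPM small system-energy}. Your extra check that $y_S(0)=y_0$ is a small but welcome addition that the paper leaves implicit.
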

\begin{proof}
 The result follows by computing the Hamiltonian of \eqref{original eq} along numerical trajectories $y_S=S_{2n}z$, $\mathcal{H}(y_S)=\frac{1}{2}{y_S}^{T}J^{-1}Ay_S,$ and then using \eqref{symplectic-lancozs symplectic form} and \eqref{SLPM small system-energy}.
 \end{proof}
 \section{Projection methods based on block $J$-orthogonal basis} 
\label{sec:Arnoldi-Model reduction}
We now consider a general strategy for Krylov projection methods to obtain  $J$-orthogonal bases.
In what follows we will use the notation $(q^T,p^T)^{T}=y$ 
and write $H$ in block form, and rewrite \eqref{original eq} accordingly:
\begin{equation}
\begin{split}\label{Hamiltonian equation pq}
\dot{q}&=H_{12}^{T}q+H_{22}p,\\
\dot{p}&=-H_{11}q-H_{12}p,
\end{split} \qquad \quad
H=
\begin{bmatrix}
     H_{11}& H_{12} \\
H_{12}^{T}& H_{22}
\end{bmatrix}.
\end{equation}
Assume that we can construct two matrices with linearly independent columns $V_n\in {\mathbb{R}}^{m\times n}$ and $W_n\in {\mathbb{R}}^{m\times n}$ such that ${V_{n}}^{T}W_{n}=I_{n}$. 
Then the matrix 
\begin{equation}
\label{blockSLPbasis}
S_{2n}:=\begin{bmatrix}
 V_{n} & 0 \\
0& W_{n}
\end{bmatrix}
\end{equation}
has $J$-orthogonal columns. We will approximate $y$ by the following projection method: $y\approx y_B$ defined by
\begin{equation}
\label{blockSLPM}
y_B= S_{2n}\,z, \quad \mathrm{where}\,\, z\,\, {\mathrm{satisfies}} \quad \dot{z}=J_{2n}\,S_{2n}^TJ^{-1}AS_{2n}, \quad z(0)=z_0,
\end{equation}
and for the SLPM $z_{0}=J_{2n}^{-1}S_{2n}^TJy_{0}$.

\begin{proposition}\label{Energy preservationof BJPMs}
If $y_0=S_{2n}z(0)$, then the energy   of the original Hamiltonian system \eqref{original eq} will be preserved by the numerical solution  \eqref{blockSLPbasis}-\eqref{blockSLPM}.
\end{proposition}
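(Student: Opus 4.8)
The plan is to follow exactly the mechanism used in the proof of Proposition~\ref{SLPM energy preservation}: show that the reduced system in \eqref{blockSLPM} is itself a canonical Hamiltonian system whose Hamiltonian, evaluated along $z(t)$, coincides with the original energy evaluated along $y_B=S_{2n}z$. The starting observation is that since $A=JH$ we have $J^{-1}A=H$, so the energy of \eqref{original eq} is simply $\mathcal{H}(y)=\tfrac12 y^THy$. It therefore suffices to track the quadratic form $y_B^THy_B$ in time.

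First I would confirm the $J$-orthogonality of the block basis: using the block form of $J$ and of $S_{2n}$ in \eqref{blockSLPbasis} together with $V_n^TW_n=I_n$, a direct computation gives $S_{2n}^TJS_{2n}=J_{2n}$, as already asserted after \eqref{blockSLPbasis}. Substituting $J^{-1}A=H$ into the reduced dynamics \eqref{blockSLPM} turns it into $\dot z=J_{2n}\widehat H\,z$ with $\widehat H:=S_{2n}^THS_{2n}$. Since $H$ is symmetric, $\widehat H$ is symmetric, so this is a canonical Hamiltonian system on $\mathbb{R}^{2n}$ with Hamiltonian $\widehat{\mathcal{H}}(z)=\tfrac12 z^T\widehat H z$. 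Conservation of $\widehat{\mathcal{H}}$ then follows from the skew-symmetry of $J_{2n}$, because $\tfrac{d}{dt}\widehat{\mathcal{H}}(z)=z^T\widehat H J_{2n}\widehat H z=0$.

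Next I would transfer this back to the full space. Inserting $y_B=S_{2n}z$ into $\mathcal{H}(y_B)=\tfrac12 y_B^THy_B$ gives $\mathcal{H}(y_B)=\tfrac12 z^T S_{2n}^THS_{2n}z=\widehat{\mathcal{H}}(z)$, so the original energy along the numerical trajectory equals the conserved reduced Hamiltonian and is thus constant in time. Finally the hypothesis $y_0=S_{2n}z(0)$ fixes the value of this constant: since $\mathcal{H}(y_B(t))\equiv\widehat{\mathcal{H}}(z(0))$, we obtain $\widehat{\mathcal{H}}(z(0))=\tfrac12 (S_{2n}z(0))^TH(S_{2n}z(0))=\tfrac12 y_0^THy_0=\mathcal{H}(y_0)$, which is precisely energy preservation for \eqref{original eq}.

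There is no serious obstacle here; the argument is a short direct computation paralleling the SLPM case. The only point deserving care is the role of the consistency assumption $y_0=S_{2n}z(0)$. Without it the reduced Hamiltonian $\widehat{\mathcal{H}}$ is still conserved, but its constant value $\widehat{\mathcal{H}}(z(0))$ need not equal the true initial energy $\mathcal{H}(y_0)$, since $y_0$ may lie outside $\mathrm{range}(S_{2n})$; the hypothesis is exactly what identifies the conserved quantity with $\mathcal{H}(y_0)$. This is the same subtlety as in the SLPM, where $z_0$ is taken to be the $J$-orthogonal projection of $y_0$.
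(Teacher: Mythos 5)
Your argument is correct and follows essentially the same route as the paper's own proof: the original energy evaluated along $y_B=S_{2n}z$ equals the reduced Hamiltonian $\tfrac12 z^TS_{2n}^TJ^{-1}AS_{2n}z$, which is conserved because the projected system is Hamiltonian, and the hypothesis $y_0=S_{2n}z(0)$ identifies the conserved value with $\mathcal{H}(y_0)$. Your write-up merely spells out the intermediate steps (symmetry of $\widehat H$, skew-symmetry of $J_{2n}$) that the paper leaves implicit.
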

\begin{proof}
Notice that 
$\mathcal{H}(S_{2n}\,z)=\frac{1}{2}  z^T S_{2n}^TJ^{-1}A S_{2n}\,z$
 is a constant because $z$ is the solution of  a Hamiltonian system with energy $\mathcal{K}(z)=\frac{1}{2}  z^T (S_{2n}^TJ^{-1}A S_{2n})\,z$. The result then follows directly from the fact that $\mathcal{K}(z)\equiv\mathcal{K}(z_0)=\mathcal{H}(y_0)$.
\end{proof}

We here propose one strategy to construct $S_{2n}$ as in \eqref{blockSLPbasis} with $W_n^TV_n=I_n$ and  $V_n=W_n$.
Let $K_n$ be the Krylov matrix $2m\times n$, and consider the first $m$ rows of $K_n$ and the last $m$ separately: 
$$K_n:=[y_0,Ay_0, \dots ,A^{n-1}y_0], \qquad K_n=\left[ \begin{array}{c}
K_n^q\\[0.2cm]
K_n^p
\end{array}
 \right].$$
We then find an orthonormal  basis $V_n$ for $\mathrm{span}\{K_n^q,K_n^p\}\subset \mathbb{R}^m$ by either a QR-factorisation (algorithm~\ref{alg:orthogonal_Vn} in the Appendix \footnote{Notice that to obtain a stable algorithm it is an advantage to replace the Krylov matrix with an orthonormal matrix obtained by the Arnoldi algorithm.}) or a Gram-Schmidt process.
\subsection{Structure preserving model reduction using Krylov subspaces}\label{Case V_n V_n only}

In this section we consider the  variational principle lying behind the presented techniques. This allows to draw connections to the techniques of structure preserving model reduction of \cite{lall2003structure}, see also \cite{peng2016symplectic}.
Assuming additional structure for $H$, we will also show that the usual APM applied to the resulting system coincides with  a structure preserving model reduction method.

Assume $[q^T,p^T]^T:=y$ and $q$ and $p$ are $m$-dimensional vectors belonging to $\mathbf{R}^m$ and its dual respectively, and that the Hamiltonian $\mathcal{H}:\mathbf{R}^m\times (\mathbf{R}^m)^*\rightarrow \mathbf{R}$ is $\mathcal{H}(q,p):=\mathcal{H}(y).$\footnote{The duality pairing between $\mathbf{R}^m$ and $(\mathbf{R}^m)^*$ is here simply $\langle p,q\rangle:=p^Tq$.}
Considering the action functional $\mathcal{S}:\mathbf{R}^m\times (\mathbf{R}^m)^*\rightarrow \mathbf{R}$ 
\begin{equation*}
 \mathcal{S}(q,p):= \int_ {t_{0}}^{t_{end}}\left( {p(t)}^{T}\dot{q}(t)-\mathcal{H}(q(t),p(t)) \right)\,dt,
\end{equation*}
Hamilton's phase space variational principle states that
$$\delta \mathcal{S} =0$$
for fixed $q_0=q(t_0)$ and $q_{end}=q(t_{end})$, and it is equivalent to Hamilton's equations \eqref{original eq}.
By projecting $q(t)$ and $p(t)$ separately on appropriate subspaces  $\mathrm{span}\{V_n\}\subset\mathbf{R}^m$ and $\mathrm{span}\{W_n\}\subset(\mathbf{R}^m)^*$, i.e., $q(t)\approx V_n\,\hat{q}(t)$ and $p(t)\approx W_n\,\hat{p}(t)$, one restricts the variational principle to $\mathrm{span}\{V_n\}\times \mathrm{span}\{W_n\}$:
$\hat{\mathcal{S}}(\hat{q},\hat{p}):=\mathcal{S}(V_n\,\hat{q}, W_n\,\hat{p}).$
By taking variations
\begin{equation*}
0=\delta \hat{\mathcal{S}}(\hat{q},\hat{p})= \delta\int_ {t_{0}}^{t_{end}}  (V_{n}\hat{p})^{T}W_{n}\dot{\hat{q}}(t)-H(V_{n}\hat{p},W_{n}\hat{q}) dt
\end{equation*}
for fixed endpoints $\hat{q}_0=\hat{q}(t_0)$ and $\hat{q}_{end}=\hat{q}(t_{end})$,
we obtain the Hamiltonian equations associated to this reduced variational principle
\begin{equation}\label{model reduction p-q smaller prob-V-W}
\begin{split}
\dot{\hat{p}}&=-{V_{n}}^{T}{H_{12}}W_{n}\hat{p}-{V_{n}}^{T}H_{11}V_{n}\hat{q},\\
\dot{\hat{q}}&={W_{n}}^{T}H_{22}W_{n}\hat{p}+{W_{n}}^{T}H_{12}^{T}V_{n}\hat{q},
\end{split}
\end{equation}
which coincide with the system for $z$ in \eqref{blockSLPM}.

\subsection{Special case $H_{1,2}=O$, $H_{2,2}=I$.}\label{specail structure of Hamiltonian matrix}
This special case is directly related to the setting in \cite{lall2003structure}. Denoting $y=(q^{T},p^{T})^{T}$, we consider the action functional associated to the Lagrangian 
\begin{equation}
\label{special1}
L(q(t),\dot{q}(t))=\frac{1}{2} {\dot{q}(t)}^{T}\dot{q}(t)-\frac{1}{2}{q(t)}^{T}H_{11}q(t)
\end{equation}
and the corresponding Hamiltonian system
\begin{equation}\label{special2-y form}
\dot{y}=Ay \quad \text{with} \quad A=\begin{bmatrix}
    0    &  I  \\
      -H_{11}    &   0
\end{bmatrix}.
\end{equation}
Let $V_n$ be the basis of the Krylov subspace $\mathcal{K}_n(-H_{11},p_0)$ obtained via the Arnoldi algorithm.
The reduced Lagrangian becomes
\begin{equation}
\label{special3}
L(\hat{q}(t),\dot{\hat{q}}(t))=\frac{1}{2} {\dot{\hat{q}}(t)}^{T}\dot{\hat{q}}(t)-\frac{1}{2}{\hat{q}(t)}^{T}{V_{n}}^{T}H_{11}V_{n}\hat{q}(t),
\end{equation}
and the corresponding Hamiltonian equations are 
 \begin{equation}\label{model reduction q-smaller prob}
\begin{split}
\dot{\hat{q}}&=\hat{p},\\
\dot{\hat{p}}&=-{V_{n}}^{T}H_{11}V_{n}\hat{q}(t).
\end{split}
\end{equation}
By  solving \eqref{model reduction q-smaller prob}, we  obtain $(\hat{q}^T,\hat{p}^T)^{T}$ and then can construct the model reduction approximation $((V_{n}\hat{q})^T, (V_{n}\hat{p})^T)^{T}\approx (q^T,p^T)^T$. 
\begin{proposition} \label{APM-model reduction}
When applied to  \eqref{special2-y form} with  $y_{0}=(0,p_{0}^{T})^{T}$, 
the model reduction procedure outlined in \eqref{special1}-\eqref{model reduction q-smaller prob} coincides with the APM.
\end{proposition}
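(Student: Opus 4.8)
The plan is to show that, for the special initial vector $y_0=(0,p_0^T)^T$, the full Krylov subspace $\mathcal{K}_{2n}(A,y_0)$ decouples into two copies of $\mathcal{K}_n(-H_{11},p_0)$, one living in the $q$-block and one in the $p$-block, and that the Galerkin projection of $\dot y=Ay$ onto this space reproduces exactly the reduced system \eqref{model reduction q-smaller prob} together with its initial data and reconstruction.

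First I would compute the action of the powers of $A$ on $y_0$. Using the block form of $A$ in \eqref{special2-y form} one has $A^2=\big[\begin{smallmatrix}-H_{11} & 0\\ 0 & -H_{11}\end{smallmatrix}\big]$, so by induction $A^{2k}y_0=(0,((-H_{11})^kp_0)^T)^T$ and $A^{2k+1}y_0=(((-H_{11})^kp_0)^T,0)^T$. Hence the even powers fill the $p$-block and the odd powers fill the $q$-block with the very same vectors $(-H_{11})^kp_0$, giving $\mathcal{K}_{2n}(A,y_0)=\big(\mathcal{K}_n(-H_{11},p_0)\times\{0\}\big)\oplus\big(\{0\}\times\mathcal{K}_n(-H_{11},p_0)\big)$, where $V_n$ used in \eqref{special3}--\eqref{model reduction q-smaller prob} is precisely the Arnoldi basis of $\mathcal{K}_n(-H_{11},p_0)$.

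Next I would use the fact that the APM approximation $y_A(t)=V_{2n}\,e^{tH_{2n}}V_{2n}^Ty_0$ depends only on the subspace $\mathcal{K}_{2n}(A,y_0)$ and not on the particular orthonormal basis chosen for it: if $\tilde V_{2n}=V_{2n}Q$ with $Q$ orthogonal then $\tilde H_{2n}=Q^TH_{2n}Q$ and $\tilde V_{2n}e^{t\tilde H_{2n}}\tilde V_{2n}^Ty_0=V_{2n}e^{tH_{2n}}V_{2n}^Ty_0$. This lets me replace the Arnoldi basis by the convenient block-diagonal basis $\hat V_{2n}=\big[\begin{smallmatrix}V_n & 0\\ 0 & V_n\end{smallmatrix}\big]$, whose columns are orthonormal and, by the first step, span $\mathcal{K}_{2n}(A,y_0)$. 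A direct block computation then gives $\hat V_{2n}^TA\hat V_{2n}=\big[\begin{smallmatrix}0 & I_n\\ -V_n^TH_{11}V_n & 0\end{smallmatrix}\big]$ and $\hat V_{2n}^Ty_0=(0,(V_n^Tp_0)^T)^T$, which are exactly the coefficient matrix and the initial datum $\hat q(0)=0$, $\hat p(0)=V_n^Tp_0$ of \eqref{model reduction q-smaller prob}. Since both methods integrate the reduced linear ODE exactly (via the matrix exponential) and share the same matrix and initial condition, the reduced trajectories coincide; reconstructing, $y_A=\hat V_{2n}z=((V_n\hat q)^T,(V_n\hat p)^T)^T$ is the model reduction approximation, and because $p_0\in\mathrm{span}\{V_n\}$ one has $V_n\hat p(0)=V_nV_n^Tp_0=p_0$, so the full initial data is reproduced exactly.

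I expect the delicate point to be the basis-independence argument: the Arnoldi process applied to $(A,y_0)$ does not itself produce the block-diagonal basis $\hat V_{2n}$ but an interlaced one whose odd-indexed vectors span the $p$-block and whose even-indexed vectors span the $q$-block, so one must make precise that the two bases span the same subspace and that the Krylov approximation is invariant under an orthogonal change of basis inside $\mathcal{K}_{2n}(A,y_0)$. Once this is granted, the remaining steps are routine block-matrix bookkeeping and the matching of the projected initial conditions.
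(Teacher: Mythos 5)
Your proposal is correct and follows essentially the same route as the paper: you identify the even/odd decoupling of $\mathcal{K}_{2n}(A,y_0)$ into two copies of $\mathcal{K}_n(-H_{11},p_0)$, pass from the interlaced Arnoldi basis to the block-diagonal basis $\bigl[\begin{smallmatrix}V_n & 0\\ 0 & V_n\end{smallmatrix}\bigr]$, and verify that the projected matrix and initial data match \eqref{model reduction q-smaller prob}. The paper realizes your ``orthogonal change of basis'' step concretely as an explicit permutation matrix $\Pi_{2n}$ with $\mathbb{U}_{2n}\Pi_{2n}=\mathrm{diag}(V_n,V_n)$, which is exactly the invariance argument you flag as the delicate point.
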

\begin{proof} 
Let $\mathbf{e}_1,\mathbf{e}_2\in\mathbf{R}^2$ be the two vectors of the canonical basis in $\mathbf{R}^2$. Denote by $\otimes$ the Kronecker tensor product. We have 
$$\mathcal{K}_{2n}(A,y_0)=\mathrm{span}\{ \mathbf{e}_1 \otimes p_0, \mathbf{e}_2 \otimes p_0,\mathbf{e}_1 \otimes (-H_{11}p_0),\mathbf{e}_2 \otimes (-H_{11})p_0, \dots\}.$$
Denote by $\mathbb{U}_{2n}\in {\mathbb{R}}^{2m\times 2n}$ the orthogonal matrix generated by the usual Arnoldi algorithm with matrix $A$, vector $y_{0}=(0,p_{0}^{T})^{T}$ and Euclidean inner product.  Then $\mathbb{U}_{2n}$ is given by
$$\mathbb{U}_{2n}=\left[  
\begin{array}{cccccccc}
0 & v_1 & 0 & v_2 &0 &\dots &0 & v_n\\
v_1   & 0& v_2 & 0 & v_3&\dots &v_n & 0
\end{array}
\right],
$$
and satisfies 
\begin{equation*}
{\mathbb{U}_{2n}}^{T}A\mathbb{U}_{2n}=\Pi_{2n}\begin{bmatrix}
     
 0& I_{n} \\
-V_{n}^{T}H_{11}V_{n}& 0 
\end{bmatrix}{\Pi_{2n}}^{T}\quad\mathrm{and}\quad \mathbb{U}_{2n}\Pi_{2n}=\left[
\begin{array}{cc}
V_n & O\\
O & V_n
\end{array}
\right],
\end{equation*}
where $v_1,v_2,\dots v_n $ are the columns of $V_n$ and $\Pi_{2n}$ is a $2n\times 2n$ permutation matrix.
After a permutation of the variables $w={\Pi_{2n}}^{T}z$, the projected system by APM $\dot{z}={\mathbb{U}_{2n}}^{T}A\mathbb{U}_{2n}z$, $z(0)={\mathbb{U}_{2n}}^{T}y_{0}$
can be rewritten in the form \eqref{blockSLPbasis}-\eqref{blockSLPM}.
\end{proof}

\section{Numerical Examples}\label{sec:numerical simulation}
\captionsetup[subfigure]{margin=0pt, parskip=0pt, hangindent=0pt, indention=0pt, labelformat=parens, labelfont=rm} 
In this section, several numerical examples are presented to illustrate the behavior of the methods described above. We will use the following methods:
\begin{itemize}
  \item APM: Arnoldi projection method using Euclidean inner product,  Section \ref{sec:APM};
  \item APMH: Arnoldi projection method using the inner product $\langle \cdot, \cdot \rangle_H$, Section \ref{sec:APM};
  \item SLPM: symplectic Lanczos projection method,  Section~\ref{subsec:SLPM};
  \item BJPM: block $J$-orthogonal projection method QR factorization, Section \ref{Case V_n V_n only}.
\end{itemize}
These methods are applied to solve randomly generated linear Hamiltonian
systems, and linear systems arising from the discretization of Hamiltonian
PDEs. 
\subsection{Randomly generated Hamiltonian matrices }
We consider numerical experiments on randomly generated linear Hamiltonian systems. 
If not mentioned otherwise, the dimension of the Krylov subspace is  chosen to be $2n = 4$ and is the same for all the Krylov methods compared. The reference exact solution is computed using the Cayley transformation with step-size $0.004$.  
The solution of the projected system \eqref{Arnoldi small system} is obtained with the same approach and step-size used for the reference exact solution. 
To obtain a desired global error accuracy on $[0,T]$ for large $T$, we either use a sufficiently large dimensions of the Krylov subspace or perform a restart procedure using the Krylov projection methods on sufficiently small subintervals. More precisely, the considered restart procedure consists in subdividing $[0,T]$ into subintervals $[t_k, t_{k+1}]$ and performing the projection on each subinterval recomputing the basis of the Krylov subspace with starting vector $y_k$, where $y_k$ is the numerical solution at $t=t_k$. The restart procedure is of practical interest because it allows to use a Krylov subspace of low dimension. However, the restart destroys the preservation of the first integrals of Propositions~\ref{first integral projected inner original Equations} and \ref{first integrals original eq commutativity} for APM and APMH because the basis $V_n$ is recomputed on each subinterval. Experiments comparable to the ones performed in this section can be found in \cite{peng2016symplectic} for model reduction techniques without restart.
\vspace{-10pt}
\subsubsection{Case $JA=AJ$: APM}
\vspace{-8pt}
In the experiments reported in Figure~\ref{Block diagonal-energy error-apm}, $H=J^{-1} A$ is block diagonal, symmetric and positive definite but with no particular extra structure. We observe in Figure~\ref{Block diagonal-energy error-apm} that there is a drift in the energy for the APM, and the advantage of APMH and SLPM is evident in this example. The global errors are not reported here, but we find that the global errors of APMH and SLPM are bounded, meanwhile there is a substantial drift in the global error of the APM in this case. In Figure~\ref{Block skew-energy error-apm} and \ref{Block skew-integrals-apm}, we apply the APM to an example where $JA=AJ$. The experiments confirm the good behaviour of the APM in this case. Figure~\ref{Block skew-energy error-apm} shows that the energy error for the APM is bounded even though the energy is not preserved exactly. The global error, which we do not report here, is also bounded for all three methods in this example. 
For such matrices, 
we observe in Figure~\ref{Block skew-integrals-apm} that the two first integrals of Proposition~\ref{first integral projected inner original Equations} for $k=0$ and $k=1$  are preserved for the APM, see also Remark~\ref{invariants for Arnoldi projection}.  
\begin{figure}[h]
\centering
 \begin{subfigure}[b]{0.33\textwidth}
 \centering
              \includegraphics[width=0.75\textwidth]{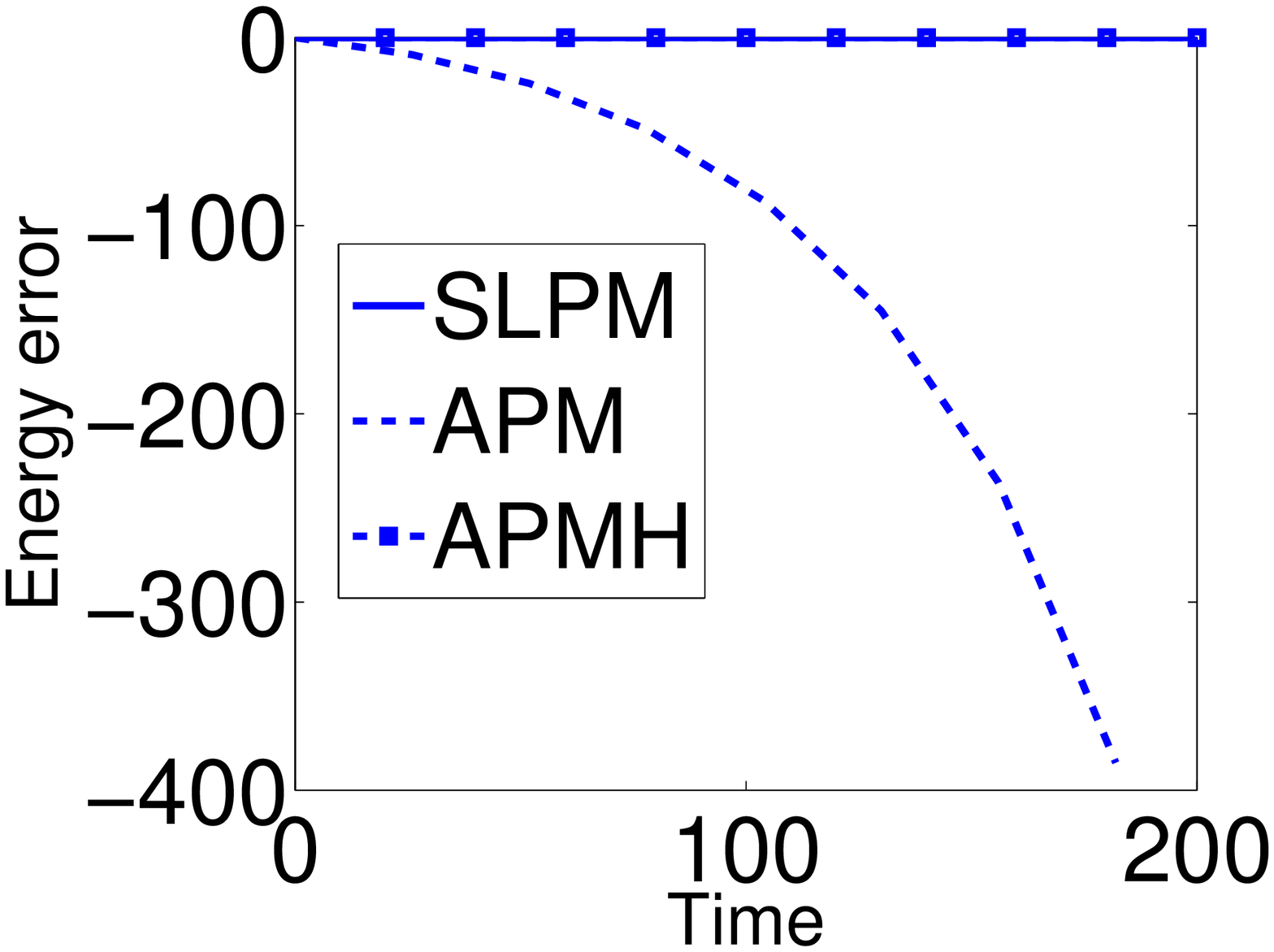}
                 \caption{Energy error}\label{Block diagonal-energy error-apm}
        \end{subfigure}
        \begin{subfigure}[b]{0.33\textwidth}
        \centering
                \includegraphics[width=0.75\textwidth]{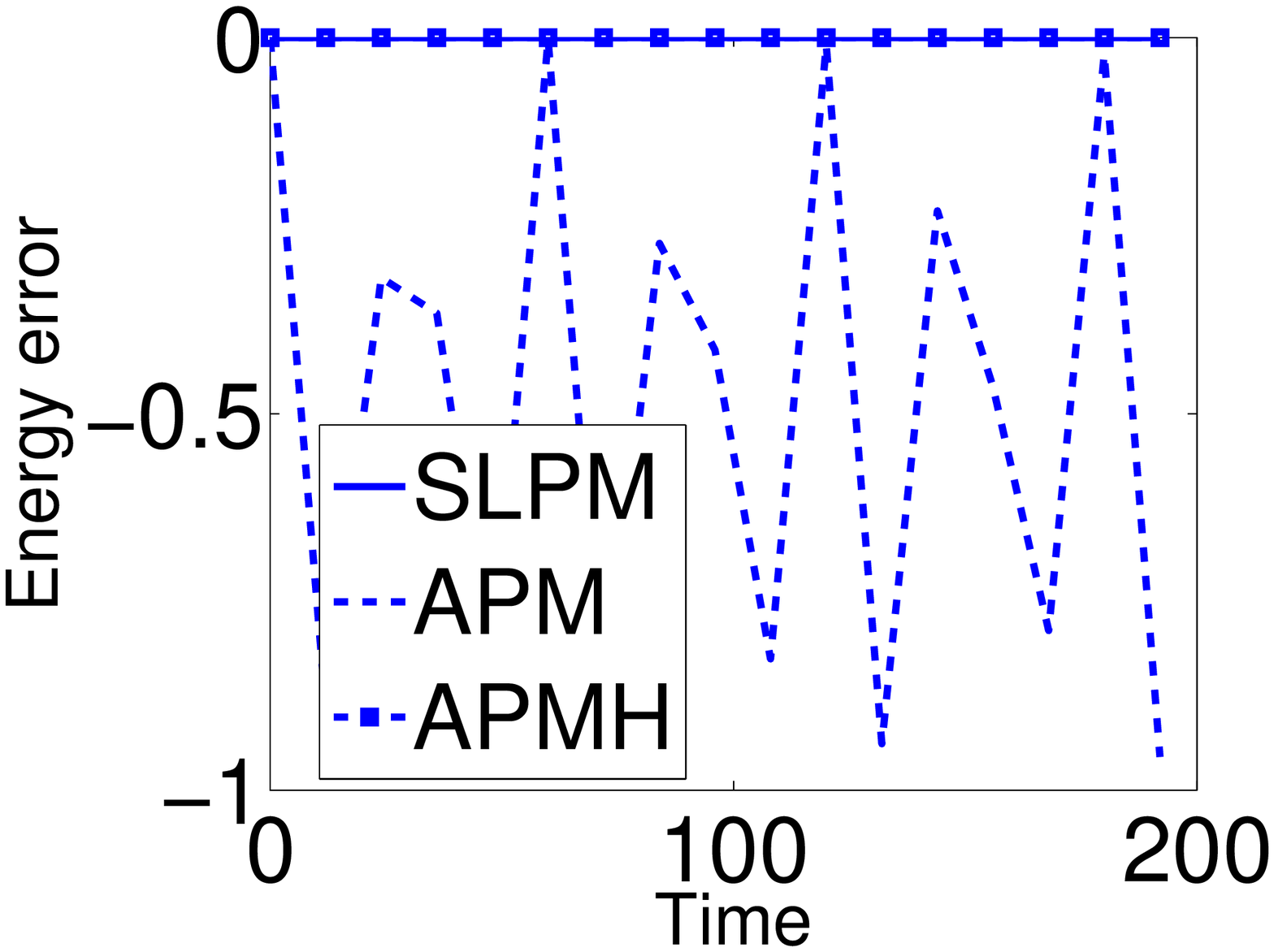}
                 \caption{\small{Energy error}}\label{Block skew-energy error-apm}
        \end{subfigure}
\begin{subfigure}[b]{0.32\textwidth}
\centering
                \includegraphics[width=0.75\textwidth]{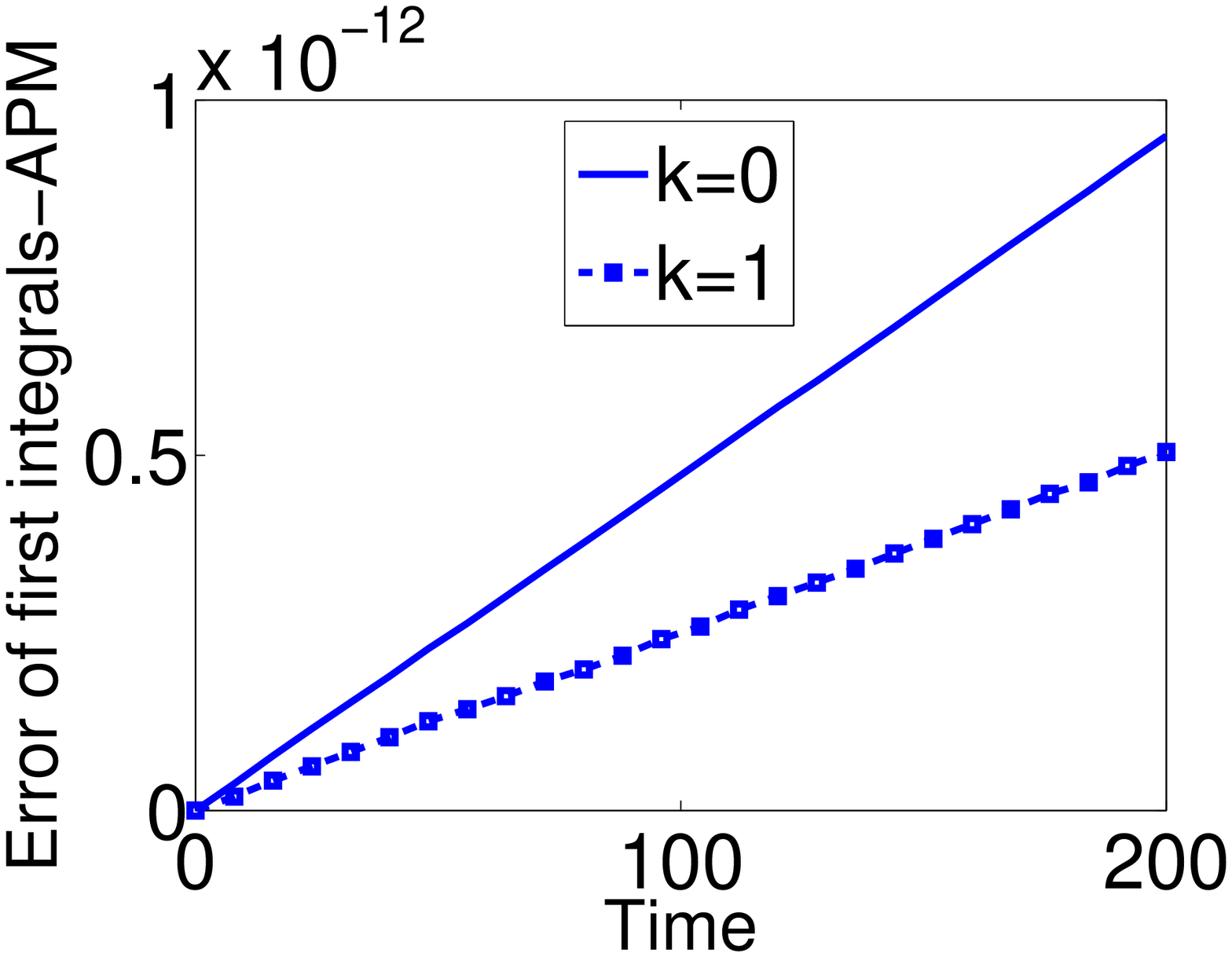}
                 \caption{First integrals, APM}\label{Block skew-integrals-apm}
        \end{subfigure}
\caption{\small{Methods without restart. In Figure \ref{Block diagonal-energy error-apm} a block diagonal Hamiltonian matrix is considered. In Figure \ref{Block skew-energy error-apm} and \ref{Block skew-integrals-apm}, we consider a skew-symmetrix, Hamiltonian matrix $A$. }}\label{verify APM-JA=AJ}
\vspace{-15pt}
\end{figure}

\subsubsection{Full matrices: Comparison of APMH, SLPM, BJPM}\label{APM versus SLPM versus model reduction}
In this subsection, we consider a randomly generated, full Hamiltonian matrix $A=JH$. 
In Figure \ref{Firstintegral-error-comp-anymatrix}, we use the methods without restart.  Figure ~\ref{firstintk01_apmh_anymatr_anyini} shows that the first integrals of  Proposition~\ref{first integral projected inner original Equations} are
 preserved by APMH on a moderately large time interval and for a general Hamiltonian matrix (i.e. imposing only that $JA$ is symmetric). However, even if the error is of size $10^{-14}$, there is a clear drift in the first integrals. A similar drift is observed also in the energy error for all methods without restart. 
In Figure \ref{Enror-error-comp-anymatrix} we use the restart technique. The energy is well preserved for all three methods, the global error grows slowly and linearly (and it remains bounded also for larger times intervals).  Here and in other experiments, the BJPM perform better in the energy error and global error compared to the other considered methods. As previously mentioned, the restart procedure destroys the preservation of first integrals of Proposition~\ref{first integral projected inner original Equations}.

In Figure~\ref{convergence}, we report convergence plots for APMH, SLPM and BJPM, showing how the global error decreases when the dimension of the Krylov subspace increases and the end time $T$ is fixed. 
We observe that the global error decreases to almost $10^{-15}$, and all the methods converge. $T$ is equal to $2$ in this experiment, but the methods converge well also for larger end time, such as $T=200$.

\begin{figure}[h]
\centering
\vspace{-15pt}
      \begin{subfigure}[b]{0.49\textwidth}
      \centering
                \includegraphics[width=0.75\textwidth]{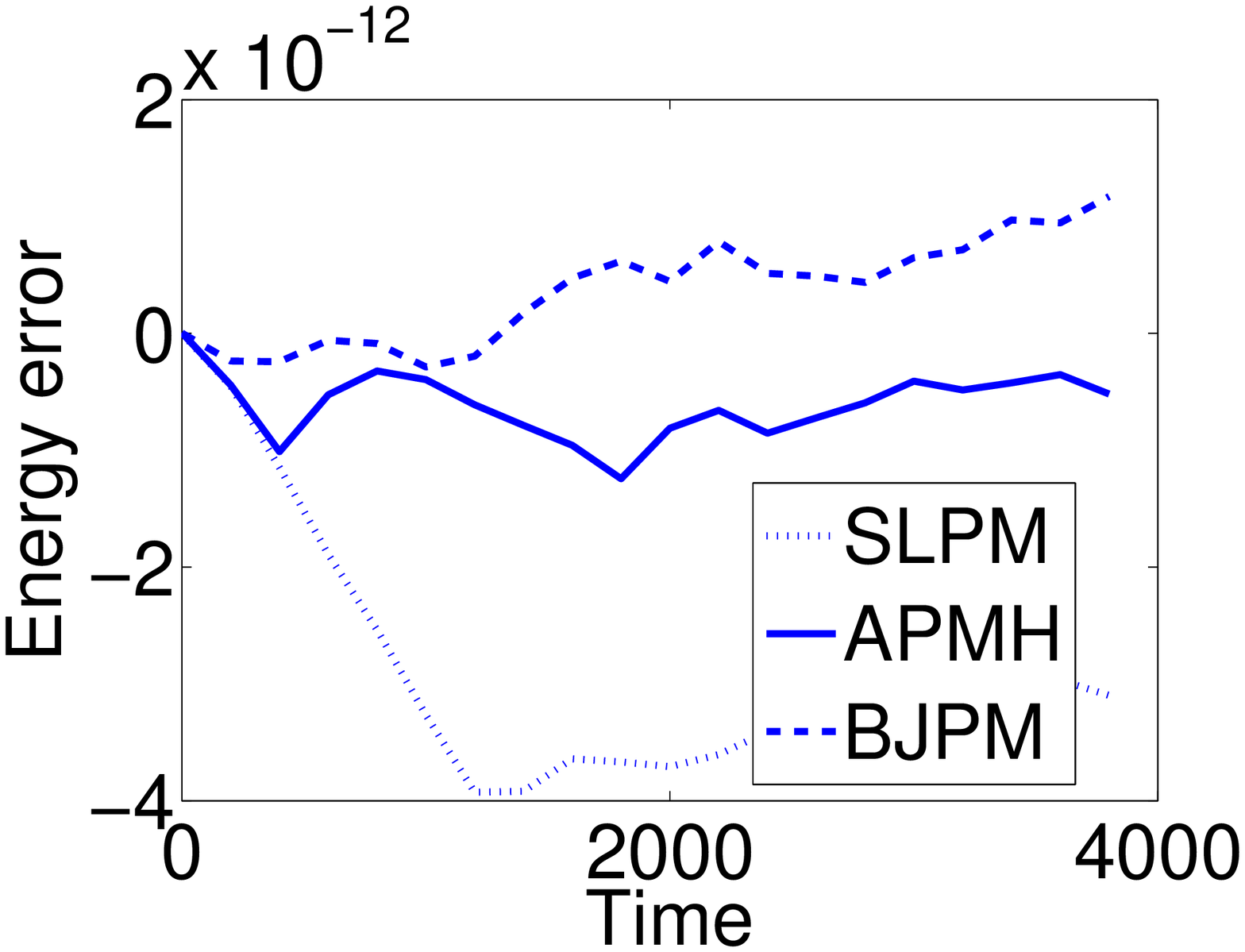}
                 \caption{Energy error}\label{ener_slpm_apmh_bjpm1_anymatr_anyini}
        \end{subfigure}
        \begin{subfigure}[b]{0.49\textwidth}
        \centering
                \includegraphics[width=0.75\textwidth]{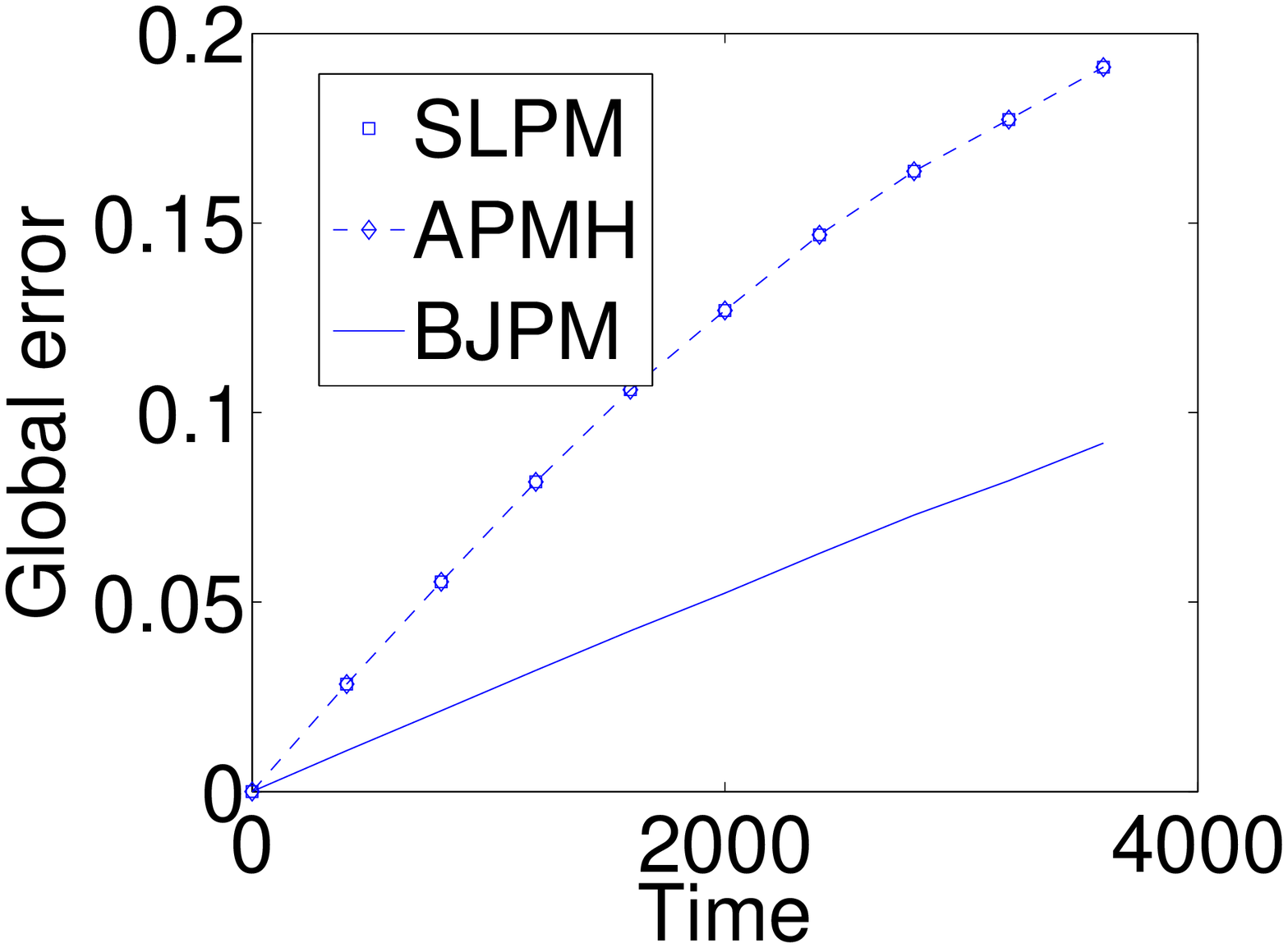}
                 \caption{Global error}\label{er_slpm_apmh_bjpm1_anymatr_anyini_restart.eps}
        \end{subfigure}
      \caption{{\small Methods with restart.}}\label{Enror-error-comp-anymatrix}
\vspace{-20pt}
\end{figure}

\begin{figure}[h]
\centering
\vspace{-20pt}
      \begin{subfigure}[b]{0.49\textwidth}
      \centering
                \includegraphics[width=0.75\textwidth]{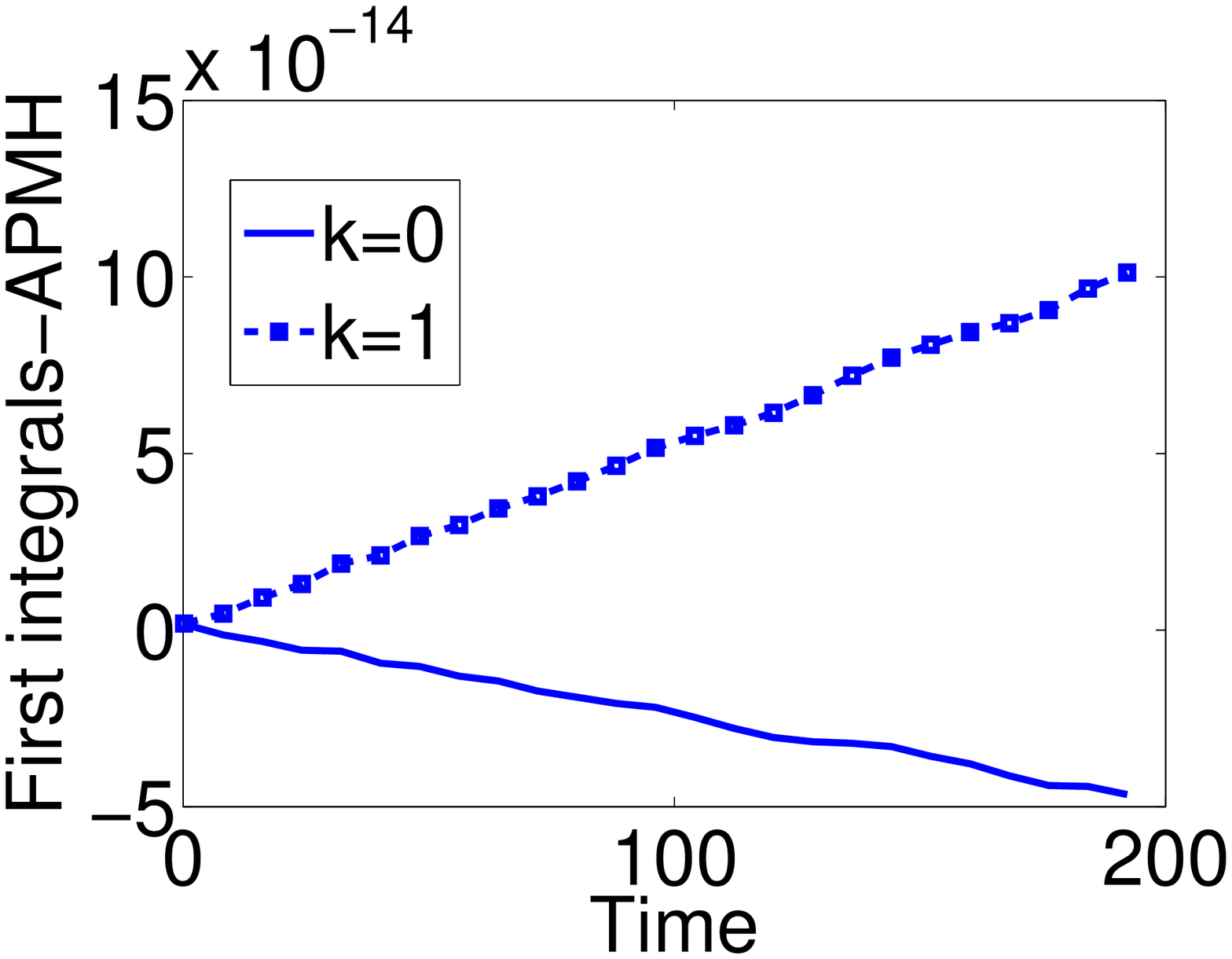}
                 \caption{\small{First integrals, APMH}}\label{firstintk01_apmh_anymatr_anyini}
        \end{subfigure}
        \begin{subfigure}[b]{0.49\textwidth}
        \centering
                \includegraphics[width=0.75\textwidth]{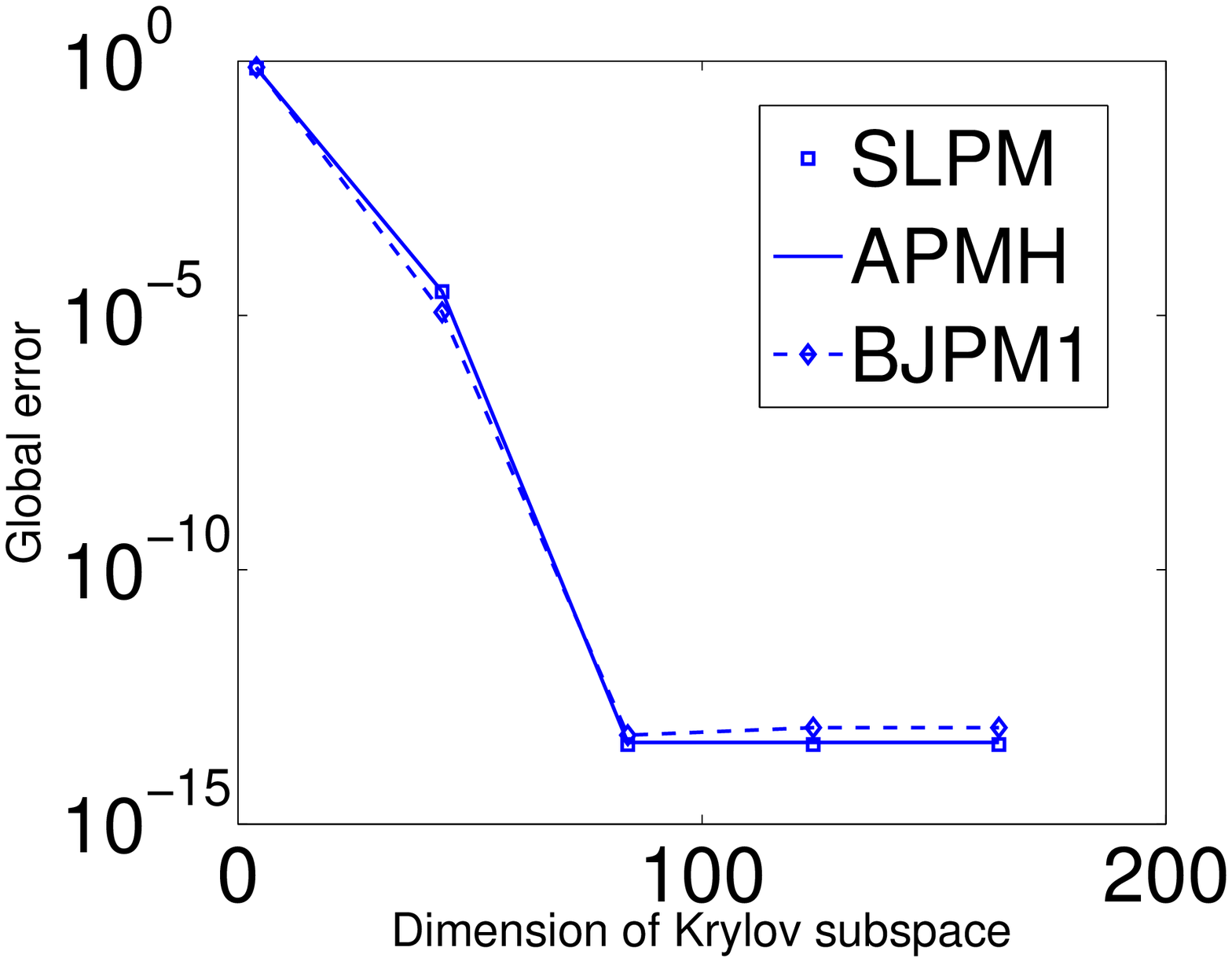}
                 \caption{Convergence}\label{convergence}
        \end{subfigure}
      \caption{{\small Methods without restart. In Figure \ref{convergence}, the global error at the end time $T=2$ is shown.}}\label{Firstintegral-error-comp-anymatrix}
\vspace{-10pt}
\end{figure}

\subsubsection{Case $H_{1,2}=O$, $H_{2,2}=I$: Model reduction}
In Figure \ref{Enror-error-Arnoldi-redup spematrix with q0}, we consider a Hamiltonian matrix $A$ of the special form \eqref{special2-y form} with an initial vector of the form $y_{0}=(0,p_{0}^{T})^{T}$. We use the Arnoldi algorithm with matrix $-H_{11}$ and vector $p_{0}$ to generate the orthogonal matrix $V_{n}$ in the model reduction procedure described in Section~\ref{specail structure of Hamiltonian matrix}. The methods behave as predicted. The APM behaves very well in this case and similarly to the methods based on model reduction, see Sec \ref{specail structure of Hamiltonian matrix}.
Energy preservation is shown in Figure \ref{Enror-Arnoldi-redup spematrix with q0} 
and bounded numerical error is shown in Figure~\ref{Error-Arnoldi-redup spematrix with q0}. 
Notice that we cannot apply the restart technique in this case because the special form of the initial vector will in general not be maintained from the first to the second subinterval. 

\begin{figure}[h]
\centering
\vspace{-10pt}
      \begin{subfigure}[b]{0.49\textwidth}
      \centering
                \includegraphics[width=0.75\textwidth]{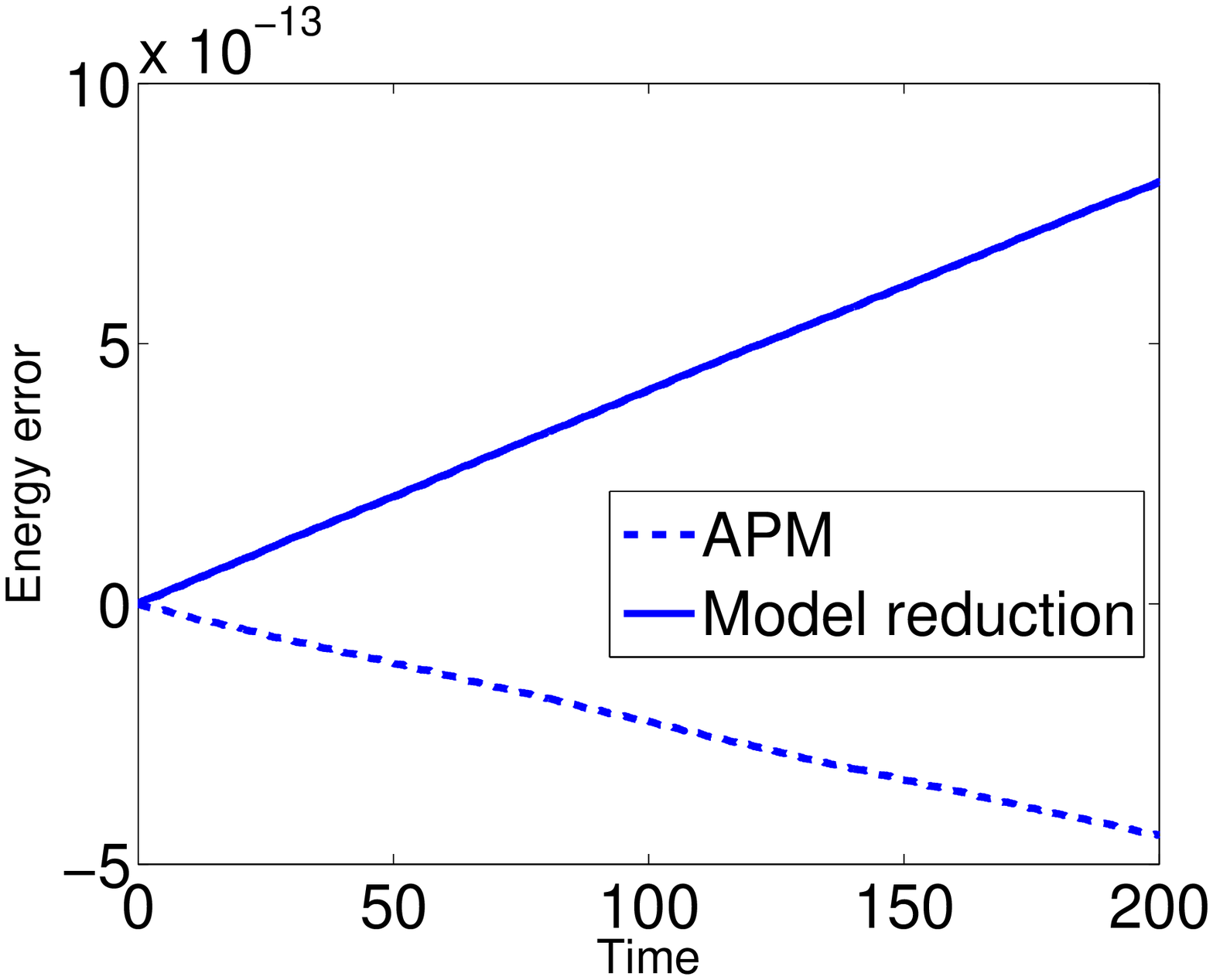}
                 \caption{Energy error}\label{Enror-Arnoldi-redup spematrix with q0}
        \end{subfigure}
        \begin{subfigure}[b]{0.49\textwidth}
        \centering
                \includegraphics[width=0.75\textwidth]{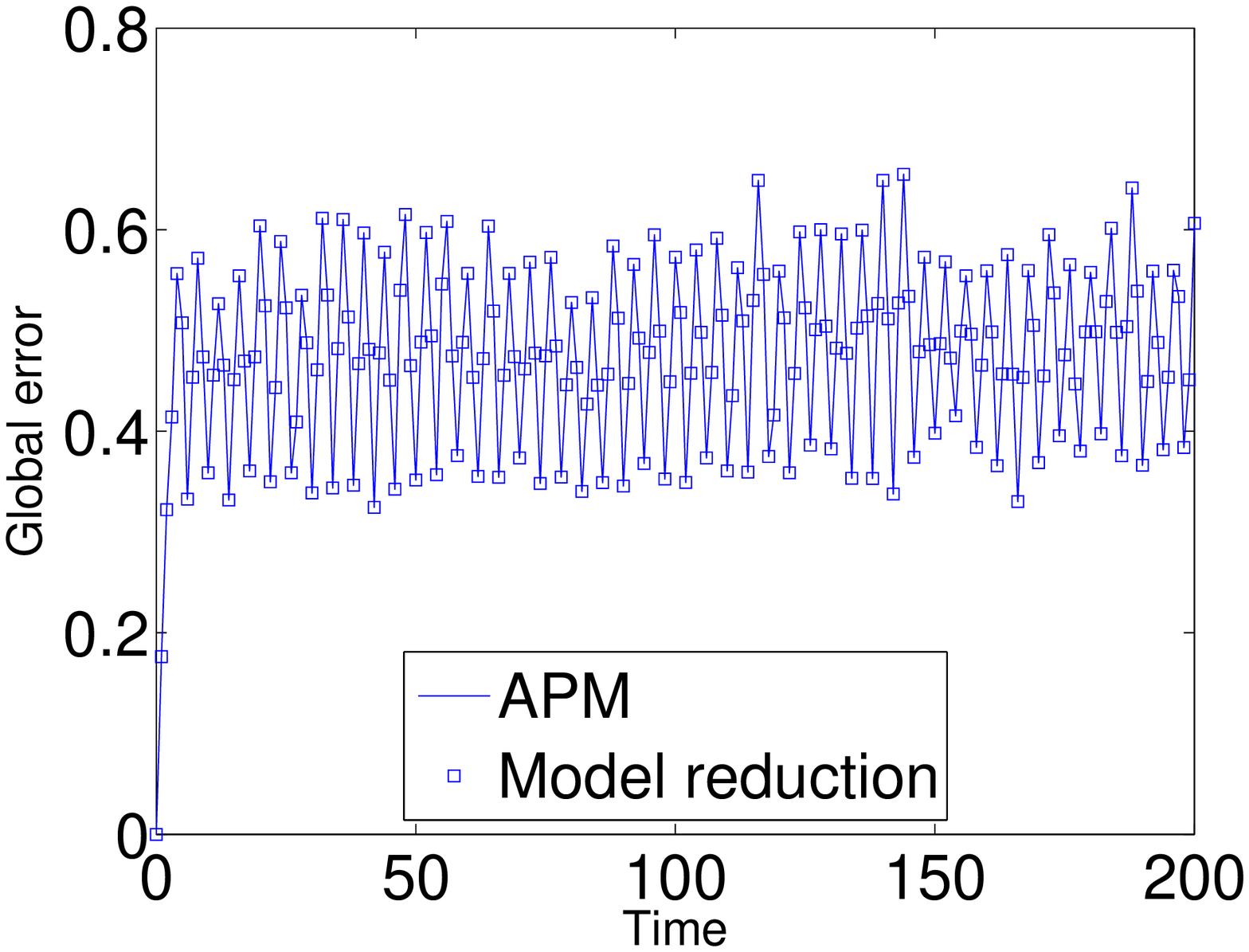}
                 \caption{Global error}\label{Error-Arnoldi-redup spematrix with q0}
        \end{subfigure}
      \caption{{\small APM compared to a procedure of model reduction. }}\label{Enror-error-Arnoldi-redup spematrix with q0}
\vspace{-30pt}
\end{figure}

\subsection{Hamiltonian PDEs}
In this section we apply the methods to the wave equations and  the \text{Maxwell's Equations}.
\vspace{-15pt}
\subsubsection{Wave equation}
We consider the $2\mathrm{D}$ wave equations
\begin{equation}\label{rewrite-2dwave Equations}
\dot{\phi}=\psi,\qquad
\dot{\psi}=\triangle \phi,
\end{equation}
on $[0,1]\times [0,1]$ with homogeneous Dirichlet boundary conditions  
$\phi(t,0,y)=\phi(t,1,y)=\phi(t,x,0)=\phi(t,x,1)=0$
and a randomly generated initial vector. 
Semi-discretizing on an equispaced  grid $x_i=i\, \Delta x$ and $y_j=j\, \Delta y$, $\Delta x=\Delta y$, $i,j=0,\dots, N$ and assuming $u(x_i,y_j)\approx U_{i,j}$, we obtain a system 
\begin{equation}\label{semidiscretete-2dwave Equations}
\dot{U}=AU,\quad
U(0)=U_{0},\qquad
A = \left[ \begin{matrix}
0 & I\\
G &  0 
\end{matrix} \right]
\end{equation}
\noindent with $G$ the discrete 2D Laplacian obtained by using 
central differences.
This is a Hamiltonian system with energy 
$\mathcal{H}=\frac{1}{2}U^{T}JAU\equiv\frac{1}{2}U(0)^{T}JAU(0).$
We perform experiments with all the Krylov projection methods discussed in this paper. Figure \ref{ener_max1d} shows that all the methods are energy-preserving. Figure \ref{firstintk01_apmh_wave2d}, shows that first integrals are preserved by APMH.

\begin{figure}[h]
\centering
 \begin{subfigure}[b]{0.32\textwidth}
 \centering
                \includegraphics[width=0.75\textwidth]{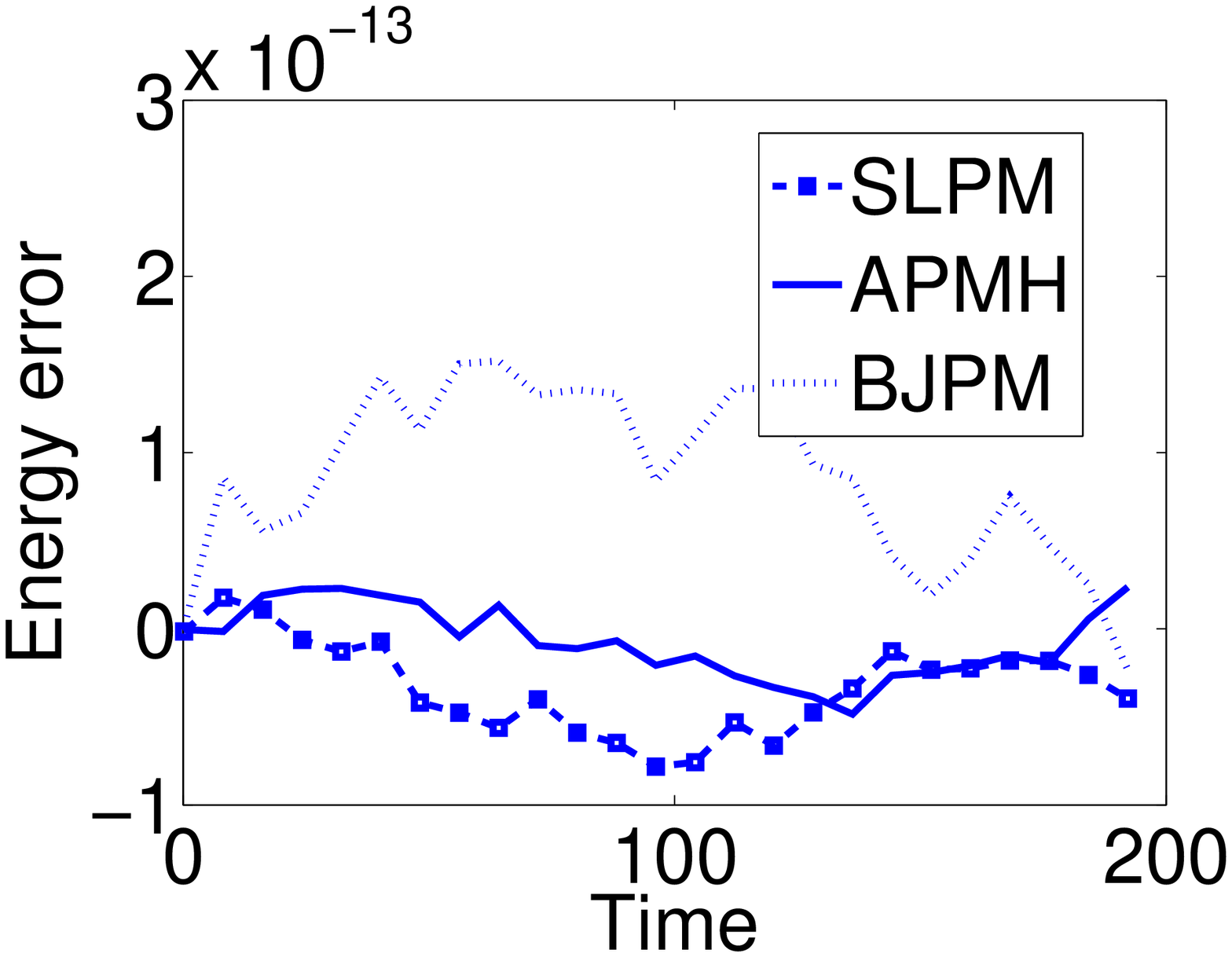}
                 \caption{Energy error}\label{ener_max1d}
        \end{subfigure}
        \begin{subfigure}[b]{0.32\textwidth}
        \centering
                \includegraphics[width=0.75\textwidth]{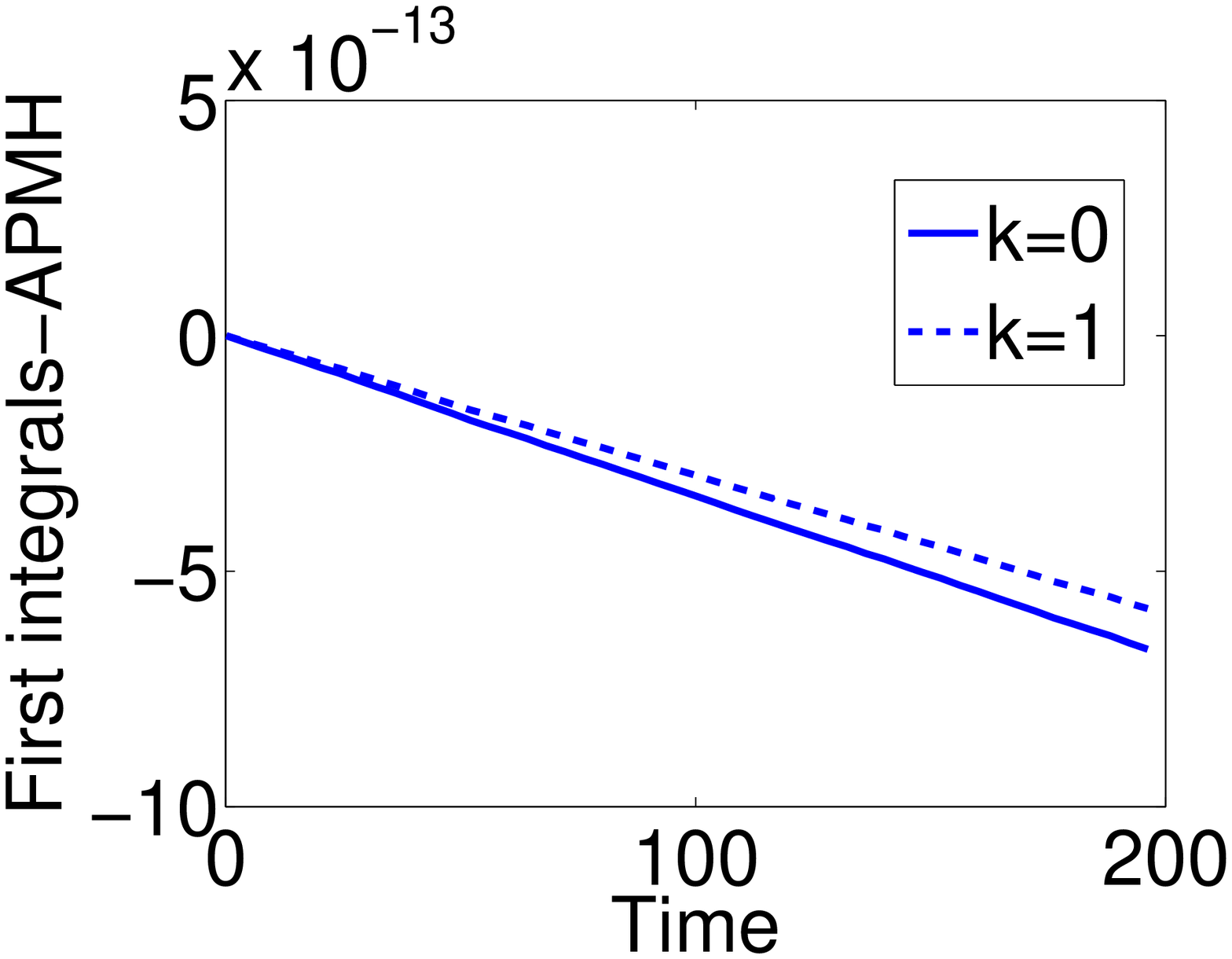}
                 \caption{{\small First integrals-APMH}}\label{firstintk01_apmh_wave2d}
        \end{subfigure}
\begin{subfigure}[b]{0.32\textwidth}
\centering
                \includegraphics[width=0.75\textwidth]{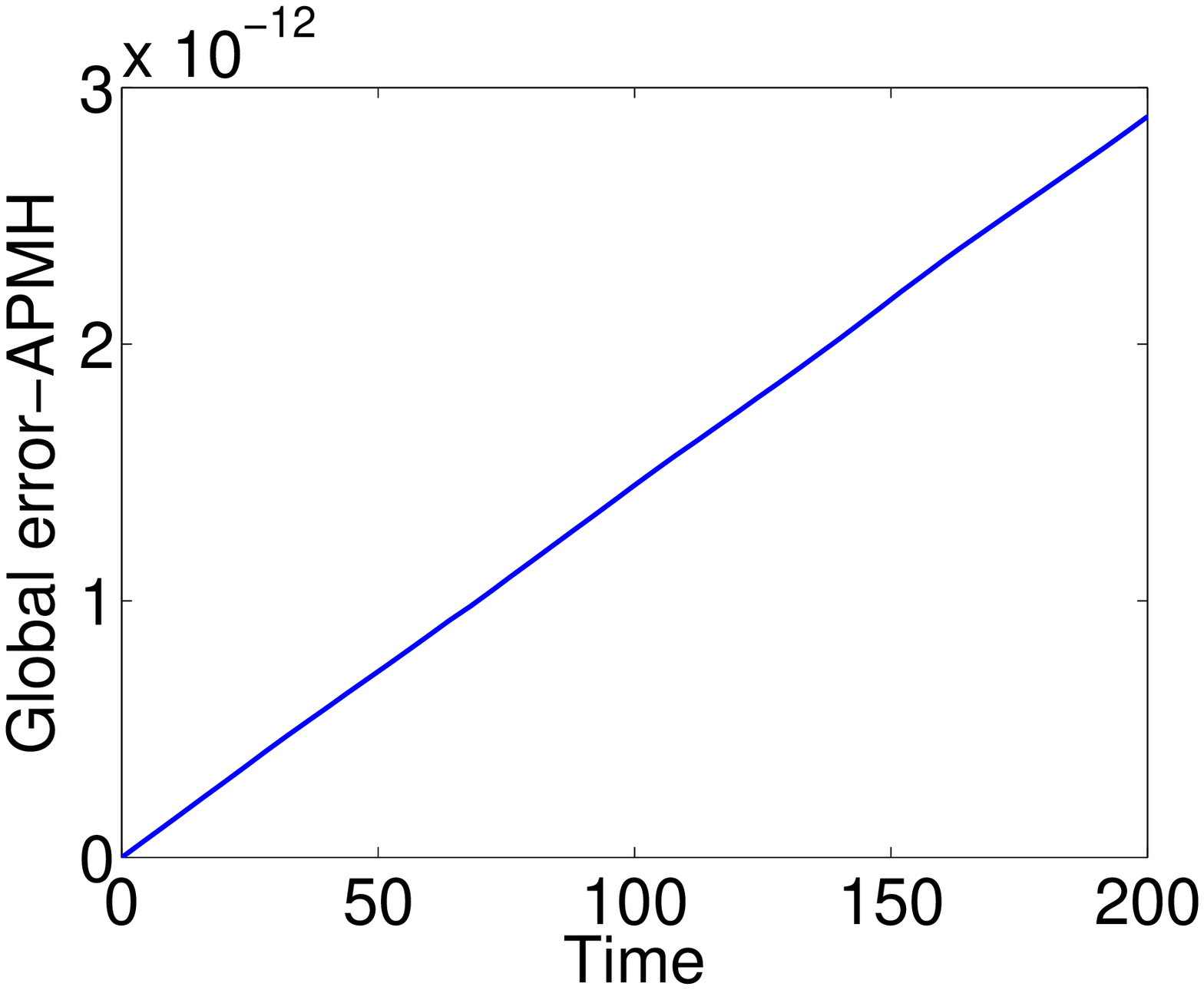}
                 \caption{{\small Global error-APMH}}\label{Errors_maxwell1d_inner}
        \end{subfigure}
\caption{\small{Wave equation in $2d$. Figure \ref{ener_max1d} energy error, methods with restart. Figure \ref{firstintk01_apmh_wave2d},  first integrals, methods without restart. \text{Maxwell's} equations in $1$d, Figure \ref{Errors_maxwell1d_inner}, global error, methods without restart.
}}
\vspace{-15pt}
\end{figure}

\subsubsection{$1\mathrm{D}$ \text{Maxwell's equations}}\hspace*{\fill} \vspace{-10pt}\\
We consider $1\mathrm{D}$ \text{Maxwell's equations}  
\begin{equation}\label{1dMaxwell Equations}
\begin{split}
\partial_{t} E&=\partial_{x}B,\\
\partial_{t} B&=\partial_{x}E
\end{split}
\end{equation}
for $x\in [0,1]$ and $t>0$ with boundary conditions $E(0,t)=E(1,t)=0, \, B_{x}(0,t)=B_{x}(1,t)=0$ and initial conditions $E(x,0)=\sin(\pi x)$ and $B(x,0)=\cos(\pi x)$.
After \text{semi-discretization} with $E(x_i,t)\approx E_i(t)$ and $B(x_i,t)\approx B_i(t)$, $i=0,\dots , N$, we get a system of ODEs 
\begin{equation}\label{semidiscrete 1dMaxwell Equations}
\dot{U}=\bar{S}DU, \quad U(0)=U_0,
\end{equation}
where $U=[E_{1},...,E_{N-1},B_{0},...,B_{N}]^{T}$ and 
\begin{equation*} 
{\small \bar{S}=\frac{1}{2h} \left[
\begin{matrix}
0_{N-1,N+1} & G\\
\vspace{4mm}
\\
 -G^{T} & 0_{N+1,N-1}
\end{matrix}\right]}, \qquad
{\small G = \left[ \begin{matrix} 
-2 & 0 & 1 & & \\
\vspace{1mm}
    & -1 & 0 & 1 &  \\
\vspace{1mm}
& & \ddots & \ddots & \ddots & \\
\vspace{1mm}
& & & -1 & 0 & 1\\
\vspace{1mm}
& & & & -1 & 0 & &2 \end{matrix} \right]}
\end{equation*}
and $D=\mathrm{diag}(I_{N-1},\frac{1}{2},I_{N-1},\frac{1}{2})$.
Equation \eqref{semidiscrete 1dMaxwell Equations} fits the framework of 
section \ref{sec:APM}, with $\bar{S}$ skew-symmetric and $D$ symmetric and positive definite, therefore APMH can be applied to this problem. The numerical approximation of  $U$ obtained applying the APMH 
preserves the first integrals $\mathcal{H}_{k}(\bar{U})$ of
Proposition \ref{first integral projected inner original Equations}. The first integrals are preserved with an error of about  $10^{-13}$ (not reported here). 
In Figure \ref{Errors_maxwell1d_inner}, we show the global error and observe that the problem is solved with high accuracy.
\vspace{-10pt}

\subsection{Numerical results for $3\mathrm{D}$ \text{Maxwell's equations}}
We consider $3\mathrm{D}$ \text{Maxwell's equations} in CGS units for the electromagnetic field in a vacuum
\begin{equation}\label{3dMaxwell Equations}
\begin{split}
\partial_{t} E&=-c\nabla\times B,\\
\partial_{t} B&=c\nabla\times E.
\end{split}
\end{equation}
The boundary conditions are zero and the initial conditions are randomly generated for both fields. We consider $c=1$. We get the following Hamiltonian system after \text{semi-discretization}:
\vspace{-20pt}
\begin{multicols}{2}
\vspace{-6pt}
\begin{equation*}\label{semidiscrete 3dMaxwell Equations}
\dot{U}=AU, \qquad U(0)=U_{0},
\end{equation*}\quad
\begin{equation}\label{semidiscrete 3dMaxwell Equations}
A= \left[
\begin{matrix}
0 & -G_{1}\\
 G_{1} & 0
\end{matrix}\right],
\end{equation}
\end{multicols}
\noindent where $U=[E_{1,1,1},...,E_{N-1,N-1,N-1},B_{1,1,1},...,B_{N-1,N-1,N-1}]^{T}$ and $G_{1}$, symmetric and of the size $(N-1)^3$, is the discretization of the curl operator $\nabla\times$. 
\vskip-0.3cm
\begin{remark}\label{first integral projected semidiscrete 3dMaxwell Equations-apmh}
The matrix $A$ is skew-symmetric in equation \eqref{semidiscrete 3dMaxwell Equations}. Therefore 
the APMH with $J=A, H=I$ applied to the system \eqref{semidiscrete 3dMaxwell Equations},  equals the APM and preserves the first integrals $\mathcal{H}_{k}(\bar{U})$ 
of Proposition \ref{first integral projected inner original Equations}. 
\end{remark}
\begin{remark}\label{first integral projected semidiscrete 3dMaxwell Equations-Hamiltonian}
Equation \eqref{semidiscrete 3dMaxwell Equations} can be rewritten as a Hamiltonian equation
$\dot{U}=JHU,$ with $H=J^{-1}A$ a symmetric matrix. Therefore we can also apply SLPM and BJPM to system \eqref{3dMaxwell Equations} and the energy $\mathcal{H}(U)= \frac{1}{2}U^{T}J^{-1}AU$ is preserved. However, APMH cannot be used here because $H$ is not a positive definite matrix, and the inner product $\langle \cdot, \cdot\rangle_H$ is degenerate. This can lead to instabilities and both  global error and energy error might blow up during the iteration.
\end{remark}
\vspace{-5pt}
\vspace{-15pt}
\begin{figure}[h]
\centering
 \begin{subfigure}[b]{0.32\textwidth}
 \centering
                \includegraphics[width=0.75\textwidth]{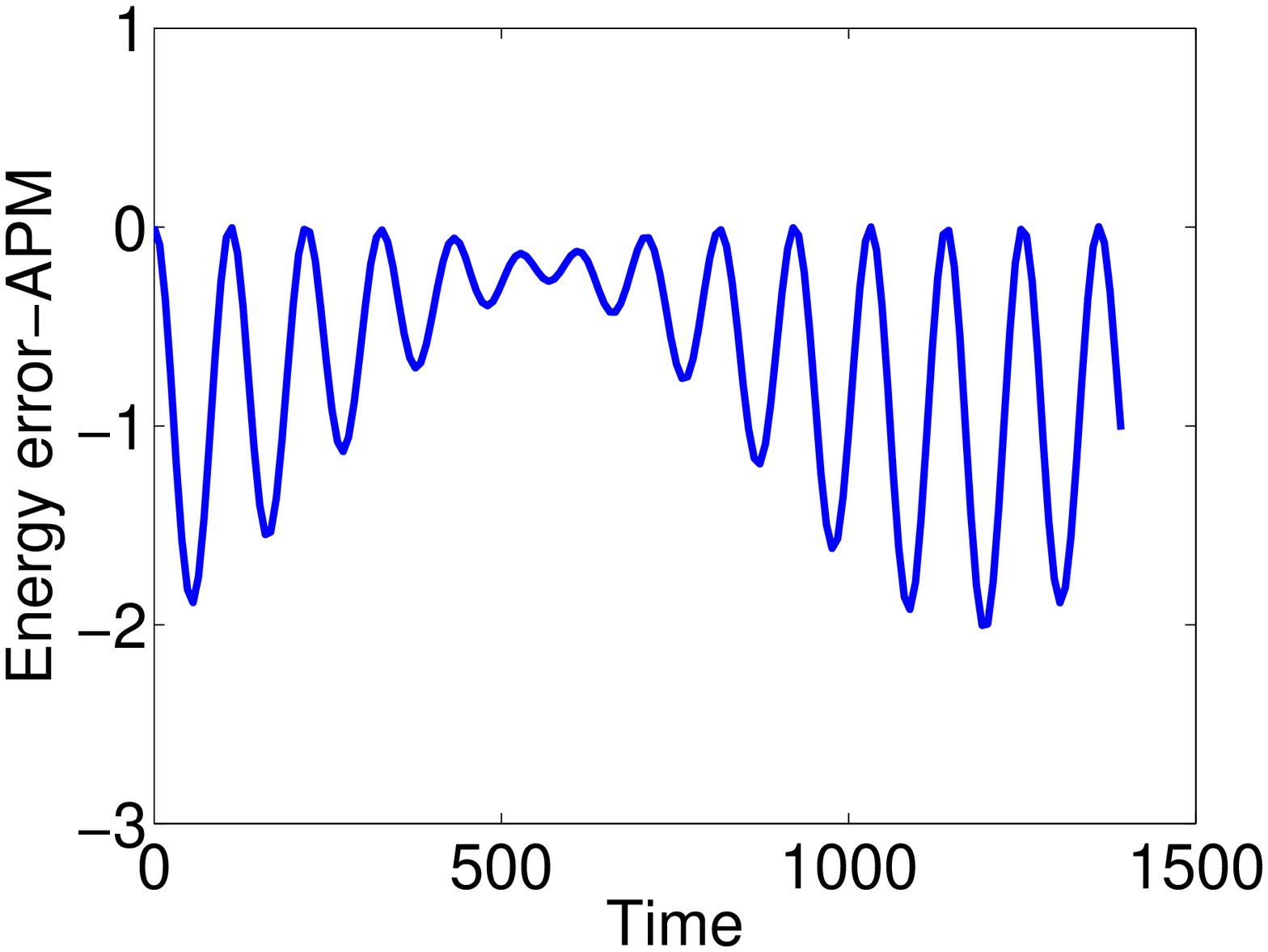}
                 \caption{Energy error}\label{ener_apm_apmh_3dmax}
        \end{subfigure}
        \begin{subfigure}[b]{0.32\textwidth}
        \centering
                \includegraphics[width=0.75\textwidth]{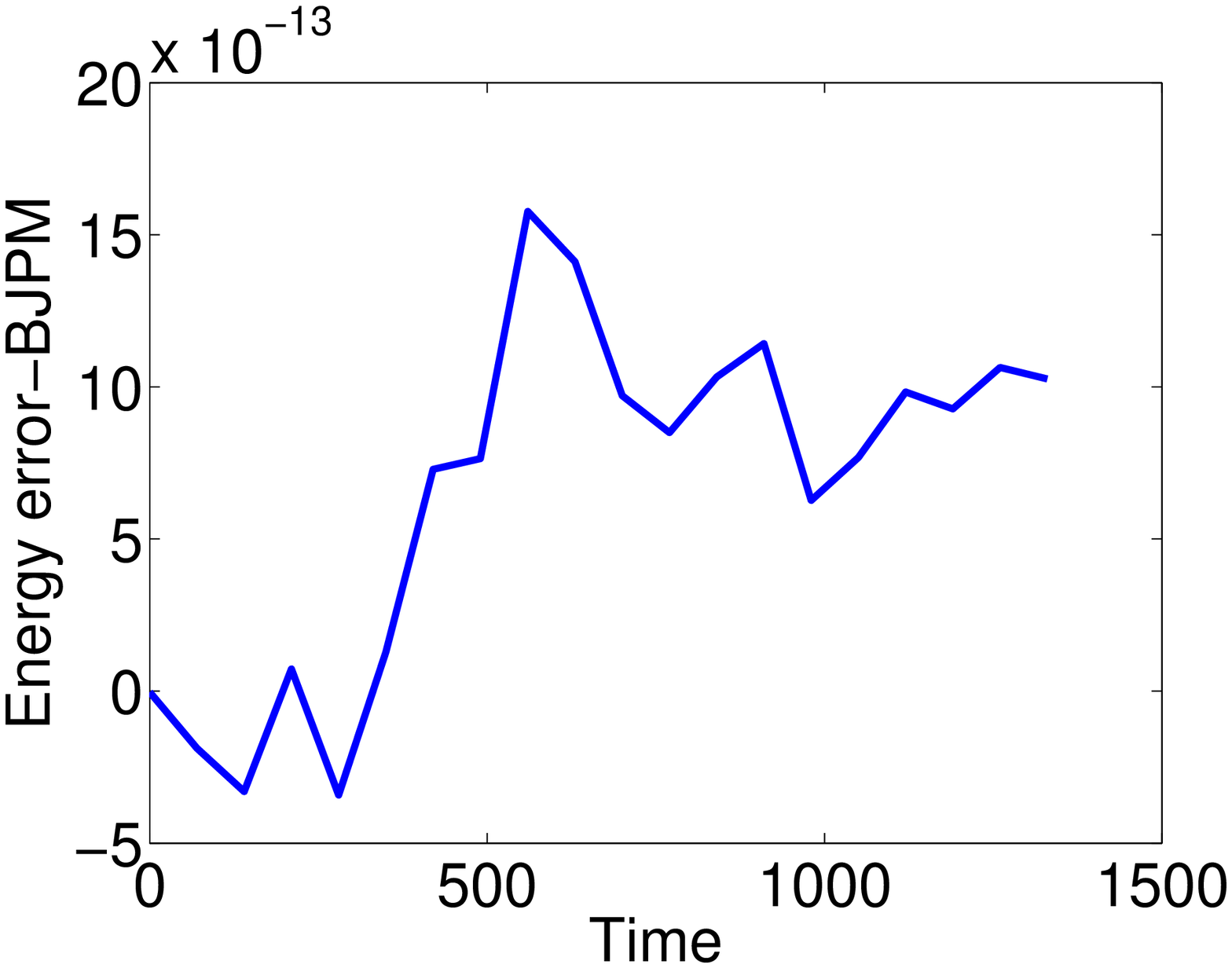}
                 \caption{\small{Energy error, BJPM}}\label{ener_slpm_bjpm1_3dmaxwell}
        \end{subfigure}
\begin{subfigure}[b]{0.32\textwidth}
\centering
                \includegraphics[width=0.75\textwidth]{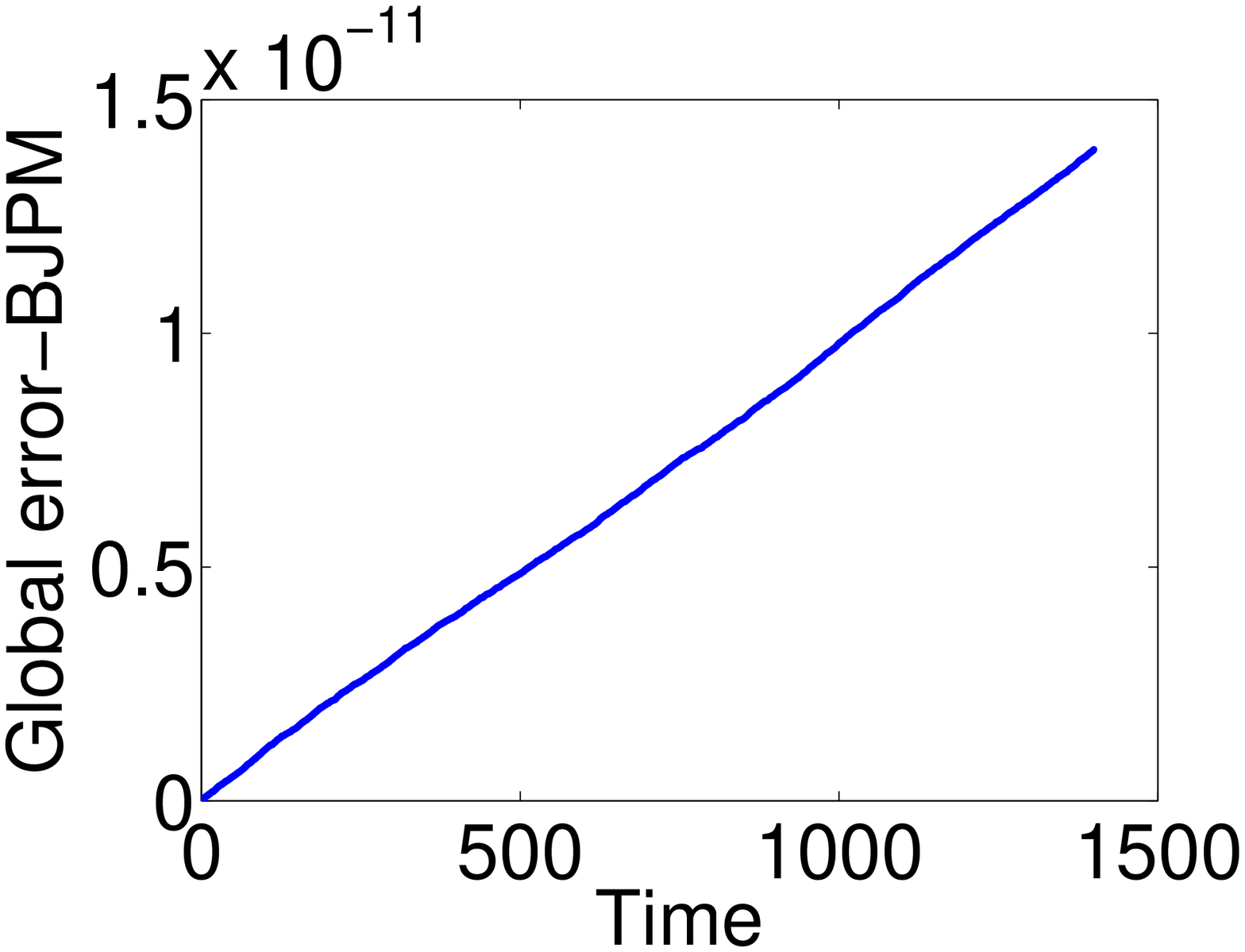}
                 \caption{\small{Global error, BJPM}}\label{error_bjpm_7000steps_3dmax}
        \end{subfigure}
\caption{{\small The dimension of Krylov subspace is set to be $4$ in Figure \ref{ener_apm_apmh_3dmax} and $16$ in Figure \ref{ener_slpm_bjpm1_3dmaxwell} and \ref{error_bjpm_7000steps_3dmax}. In Figure \ref{ener_slpm_bjpm1_3dmaxwell} and \ref{error_bjpm_7000steps_3dmax}, the methods with the restart technique are used. Figure \ref{ener_apm_apmh_3dmax} corresponds to the energy error considered as in Remark \ref{first integral projected semidiscrete 3dMaxwell Equations-apmh}, while figure \ref{ener_slpm_bjpm1_3dmaxwell} to the energy error considered in Remark \ref{first integral projected semidiscrete 3dMaxwell Equations-Hamiltonian}.}\label{ener_apm_apmh_slpm_bjpm1_3dmaxwell}}
\vspace{-25pt}
\end{figure}

\begin{figure}[h]
\centering
 \begin{subfigure}[b]{0.49\textwidth}
 \centering
                \includegraphics[width=0.75\textwidth]{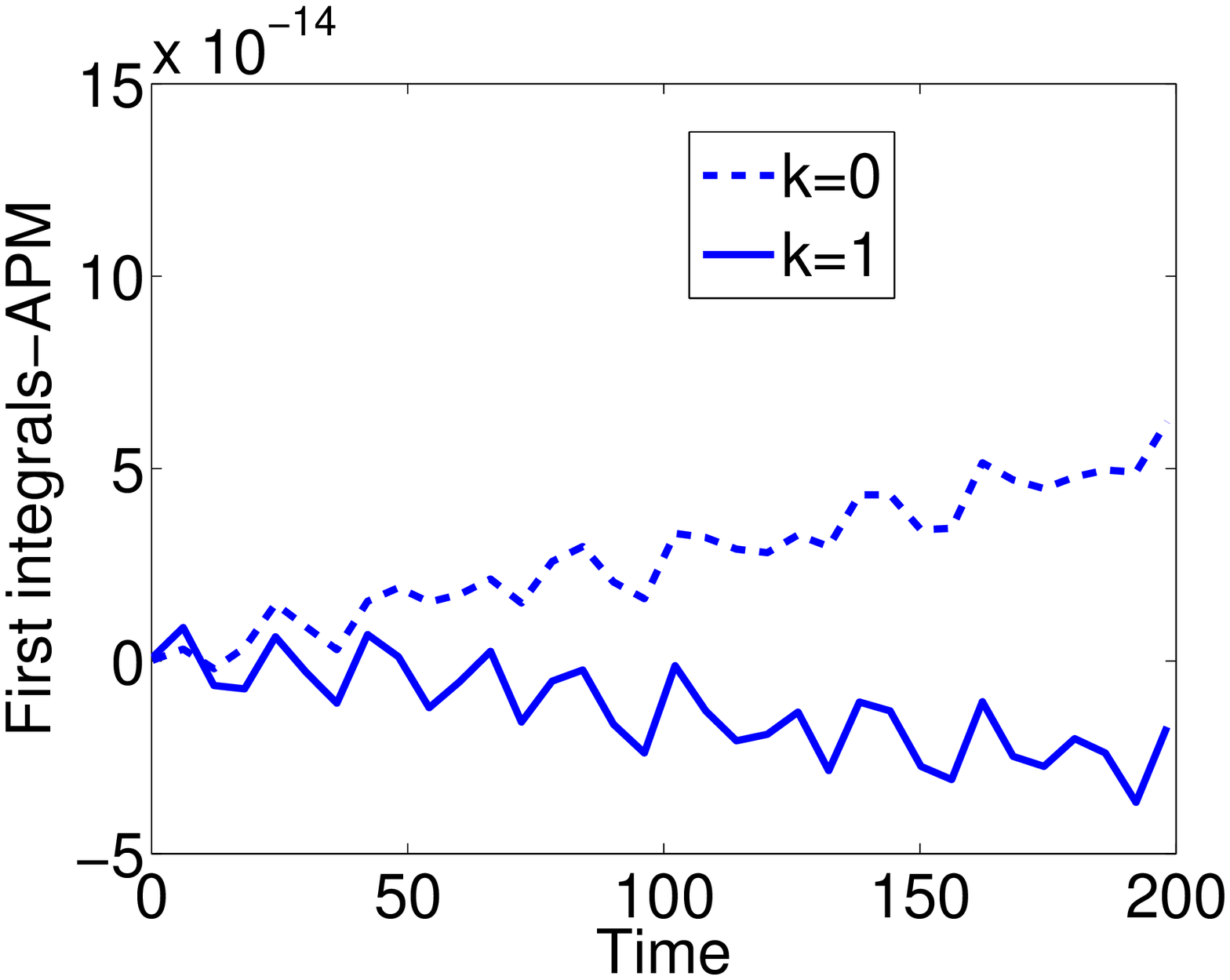}
                 \caption{{\small First integrals-APM}}\label{firintek01_apm_apmh_3dmax}
        \end{subfigure}
        \begin{subfigure}[b]{0.49\textwidth}
        \centering
                \includegraphics[width=0.75\textwidth]{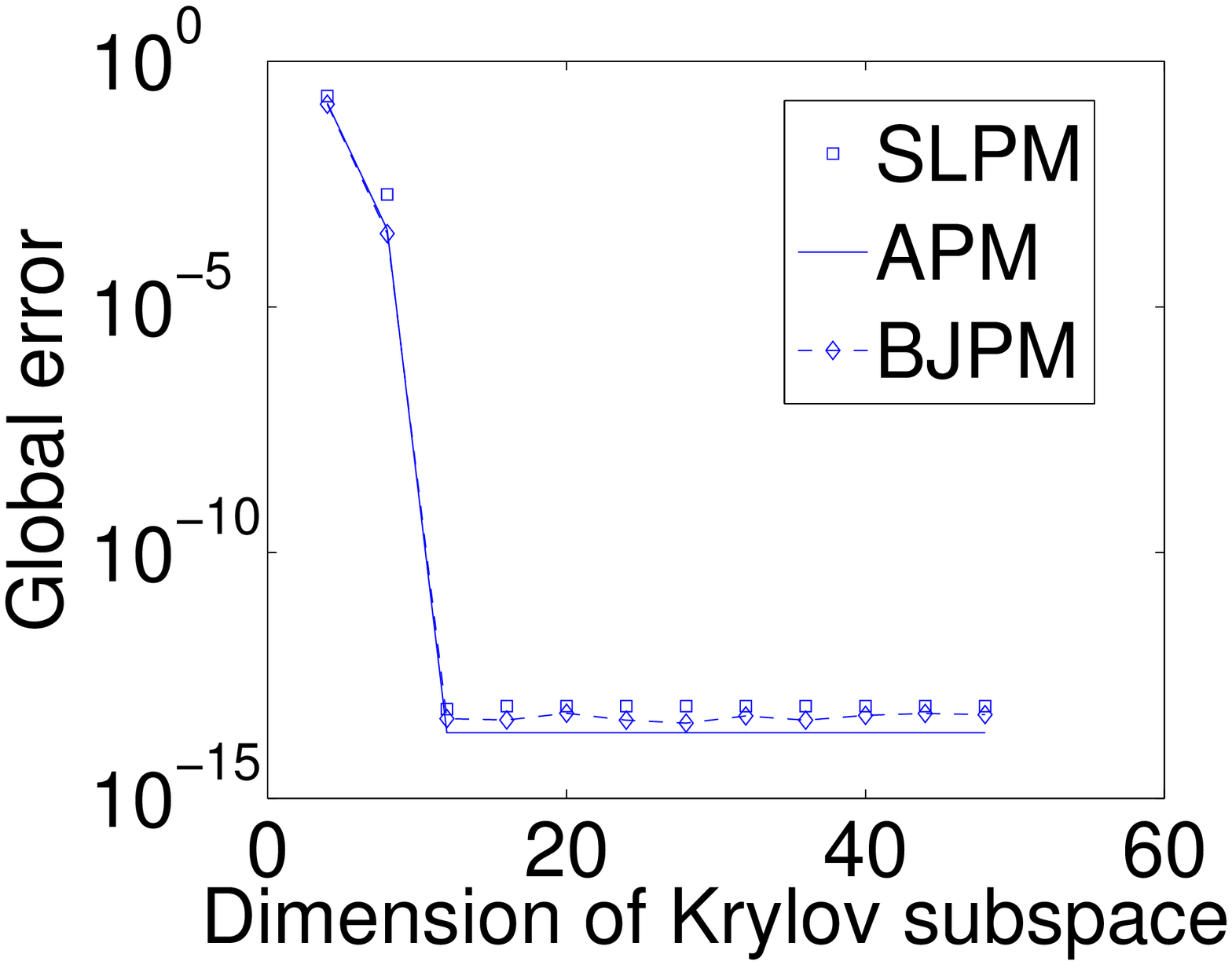}
                 \caption{Convergence}\label{conv_slpm_bjpm1_apm_apmh_3dmaxwell}
        \end{subfigure}
\caption{{\small In Figure~\ref{firintek01_apm_apmh_3dmax}, the dimension of Krylov subspace is set to be $4$. In Figure~\ref{conv_slpm_bjpm1_apm_apmh_3dmaxwell} we consider $L^{2}$ norm of the global error at $t=T=2$ as a function of the dimension of the Krylov subspace.}}\label{er_ener_3dmaxwell}
\end{figure}

Figure \ref{ener_apm_apmh_3dmax} shows that the energy error of APM is bounded as stated in Remark \ref{first integral projected semidiscrete 3dMaxwell Equations-apmh}. The energy error of APM will decrease to $10^{-12}$ when we increase the dimension of the Krylov subspace from $4$ to $16$ (not shown here). Figure~\ref{ener_slpm_bjpm1_3dmaxwell} shows that the energy $\mathcal{H}(U)= \frac{1}{2}U^{T}J^{-1}AU$ is preserved for BJPM as stated in Remark \ref{first integral projected semidiscrete 3dMaxwell Equations-Hamiltonian}.  The problem is solved to high accuracy by BJPM with dimension of Krylov space $16$. 
As shown in Figure~\ref{firintek01_apm_apmh_3dmax}, APM preserves the first integrals in Remark \ref{first integral projected semidiscrete 3dMaxwell Equations-apmh}. 
In Figure \ref{conv_slpm_bjpm1_apm_apmh_3dmaxwell}, we report convergence plots for the methods. As the dimension of the Krylov subspace increases, the  global error decreases very fast for all the methods.

\section*{Acknowledgment} 
\vskip-0.3cm
This work was supported by the European Union's Horizon 2020 research and innovation programme under the Marie Sklodowska-Curie, grant agreement No.
691070. 
The second author would like to thank Dr. Long Pei for helpful discussions and suggestions on previous versions of this paper.
\vskip-0.3cm


\section{Appendix}\label{sec:algorithm}
\begin{table}[H]
\vspace{-20pt}
\begin{subalgorithm}{.5\textwidth}
\begin{algorithmic}[1]
  \caption{ Arnoldi's algorithm with modified inner product} \label{alg:Arnoldi modified inner product algorithm}
\State Input: a matrix $J\in {\mathbb{R}}^{m\times m}$, $H\in {\mathbb{R}}^{m\times m}$, a vector $b\in\mathbb{R}^{m}$, a number $n\in\mathbb{N}$ and a tolerance $\iota\in\mathbb{R}$.

\State $A=JH$\;
\State $v_{1}=\frac{b}{\langle b,b\rangle_{H}^\frac{1}{2}}$\;
\For{$j=1 : n$}
\State compute $w_{j}=Av_{j}$\;
\For{$k=1: 2$}
\For{$i=1: j$}
\State $h_{i,j}=\langle v_{i},w_{j}\rangle_{H}$\;
\State $w_{j}=w_{j}-h_{i,j}v_{i}$\;
\EndFor
\EndFor
\State $h_{j+1,j}=\langle w_{j},w_{j}\rangle_{H}^\frac{1}{2}$\;
\If{$h_{j+1,j} < \iota$}
\State{Stop}
\EndIf
\State $v_{j+1}=w_{j}/h_{j+1,j}$\;
\EndFor
\State Output: $H_{n}, V_{n}, v_{n+1}, h_{n+1,n}$.
\end{algorithmic}
\end{subalgorithm}
\begin{subalgorithm}{.5\textwidth}
\begin{algorithmic}[1]
 \caption{ Algorithm to generate $V_{n}$ (by QR factorization)}\label{alg:orthogonal_Vn}
\State {Matrix $A\in {\mathbb{R}}^{2m\times 2m}$, vector $b\in {\mathbb{R}}^{2m}$, number $n\in\mathbb{N}$.}
\State $v=b$\;
\State $K_n=v$\;
\For{$i=1: n-1$}
\State $v=Av$\;
\State $K_n=[K_n,v]$\;
\EndFor
\State $K_n^q=K_n(1:m,:)$\;
\State $K_n^p=K_n(m+1:2m,:)$\;
\State $[Q,R]=qr([K_n^q,K_n^p])$\;
\State $V_{n}=Q(:,1:k),\quad k=\mathrm{rank}([K_n^q,K_n^p])\le 2n$\;
\State Output $V_{n}$.
\label{tab:1}
\end{algorithmic}
\end{subalgorithm}
\end{table}

\end{document}